 \newtheorem{thm}{Theorem}[section]
\newtheorem{proposition}[thm]{Proposition}
\newtheorem{e-definition}[thm]{Definition}
\newtheorem{remark}[thm]{\bf Remark\/}
\begin{document}

\begin{frontmatter}
\title{Existence and Uniqueness of  a Transient State for the Coupled Radiative-Conductive Heat Transfer Problem}

\author{Mohamed Ghattassi}\address{University of Lorraine, IECL UMR CNRS 7502, 54506 Vandoeuvre-l\`es-Nancy, France, {\sf mohamed.ghattassi@gmail.com}.}
\author{Jean Rodolphe Roche}\address{University of Lorraine, IECL UMR CNRS 7502, 54506 Vandoeuvre-l\`es-Nancy, France, {\sf  jean-rodolphe.roche@univ-lorraine.fr}.}
\author{Didier Schmitt}\address{University of Lorraine, IECL UMR CNRS 7502, 54506 Vandoeuvre-l\`es-Nancy, France, {\sf didier.schmitt@univ-lorraine.fr}.}

\begin{abstract}
This paper deals with existence and uniqueness results for a transient nonlinear radiative-conductive system in three-dimensional case. This system describes the heat transfer for a grey, semi-transparent and non-scattering medium with general boundary conditions. We reformulate the full transient state system as a fixed-point problem. The existence and uniqueness proof is based on Banach fixed point theorem.
\end{abstract}

\begin{keyword}
Nonlinear radiative-conductive heat transfer system, Semi-transparent medium, existence-uniqueness result,  Banach fixed point theorem.
\end{keyword}

\end{frontmatter}
\section*{Introduction}
The aim of this work is to prove the existence and uniqueness of the solution for a transient combined radiative-conductive system in three-dimensional case with general boundary conditions when the initial condition is assumed to be nonnegative. The medium is assumed grey, semi-transparent and non-scattering.

Let us  consider a convex domain $\Omega\subset\mathbb{R}^{3}$, with $C^{2}$ boundary.  Let ${\bm{\mathcal{S}}}^{2}=\{{\bm{\beta}} \in \mathbb{R}^{3},\,\,\,|{\bm{\beta}}|=1 \}$ be the unit sphere in $\mathbb{R}^{3}$ (the sphere of directions). Let $t \in (0,{\bm{\tau}})$ for $ {\bm{\tau}}>0$,  ${\bm{\mathcal{X}}}=\Omega \times {\bm{\mathcal{S}}}^{2}$, ${\bm{Q_{\tau}}}=(0,{\bm{\tau}}) \times\Omega$ and ${\bm{\Sigma_{{\bm{\tau}}}}}=(0,{\bm{\tau}}) \times \partial\Omega$. Let ${\bf n}$ be the outward unit normal to the boundary $\partial\Omega$.
We denote
\begin{equation*}
\partial\Omega_{-}=\{(x,{\bm{\beta}})\in \partial\Omega\times {\bm{\mathcal{S}}}^{2}~~\mbox{such that}~~{\bm{\beta}}.{\bf n}< 0\}.
\end{equation*}
The full system of a combined nonlinear radiation-conduction heat transfer  is written in dimensionless form,
\begin{align}
\small
&I(t,{\bm{x}},{\bm{\beta}})+{ {\bm{\beta}}}.\nabla_{\bm{x}} I(t,{\bm{x}},{\bm{\beta}})=T^{4}(t,{\bm{x}})\hspace{2.8cm}(t,{\bm{x}},{\bm{\beta}}) \in (0,{\bm{\tau}})\times{\bm{\mathcal{X}}}\label{radiatif2}
\\
&\partial_{t} T(t,{\bm{x}})- \Delta T(t,{\bm{x}}) +4\pi \theta T^{4}(t,{\bm{x}})= \theta G(t,{\bm{x}})\hspace{2.9cm}(t,{\bm{x}}) \in {\bm{Q_{\tau}}}\label{chaleur}\\
&a\partial_{n} T(t,{\bm{x}})+bT(t,{\bm{x}})= g(t,{\bm{x}})\hspace{5.0cm}(t,{\bm{x}}) \in {\bm{\Sigma_{{\bm{\tau}}}}}\label{CBchaleur}\\
&I(t,{\bm{x}},{\bm{\beta}})=I_{b}(t,{\bm{x}},{\bm{\beta}})  \hspace{4.8cm}(t,{\bm{x}},{\bm{\beta}}) \in(0,{\bm{\tau}})\times\partial\Omega_{-}\label{radiativeboundary}\\
&T(0,{\bm{x}})=T_{0}({\bm{x}})\hspace{7.7cm}{\bm{x}}\in\Omega \label{chaleur5}
\end{align}
where $I$ is the dimensionless radiation intensity, $T$ is the dimensionless temperature, $\theta$ is a positive dimensionless constant and $a$ and $b$ are real numbers satisfying the condition of $|a|+|b|>0$. The incident radiation intensity $G$ is given by 
\begin{equation} 
G(t,{\bm{x}})= \int_{{\bm{\mathcal{S}}}^{2}} I(t,{\bm{x}},{\bm{\beta}})d{{\bm{\beta}}}\hspace{1.2cm}(t,{\bm{x}}) \in  {\bm{Q_{\tau}}}.
 \label{moyenne}
\end{equation}
In this paper, we assume that the mean radiation intensity of the grey medium verifies the Stefan-Boltzmann law, which is proportional to $T^4$. The radiative transfer equation (RTE) (\ref{radiatif2}) and the conductive equation (CE) (\ref{chaleur}) are coupled via the source term $\theta\{G-4\pi T^{4}\}$. We use nonhomogeneous Dirichlet boundary $I_{b}$ conditions for radiation equation and different cases of boundary conditions $g$  for CE. For a fuller treatment of the dimensionless form of radiative conductive heat transfer system, we refer the reader to \cite{Ghattassi_20151}.

Radiative-conductive heat transfer problems are the subject of various fields of engineering and science, e.g., glass manufacturing when a hot melt of glass is cooled down to room temperature. Nowadays there is a huge literature on mathematical theory in the radiative-conductive heat transfer problem, see \cite{Amosov_20102, Amosov_2010, Amosov_20101, Amosov_2011, Kovtanyuk_2014, Thompson_2004,Laitinen1997, Laitinen1998, Laitinen2001, Laitinen2002}. For example, the paper \cite{ Amosov_20102} is devoted to the study of a nonstationary, nonlinear, nonlocal initial boundary value problem governing radiative conductive heat transfer in opaque bodies with surfaces whose properties depend on the radiation frequency. This paper is a natural extension of the work done by \cite{Amosov_20101}, where the corresponding stationary problem was treated.  In \cite{ Kovtanyuk_2013}, the authors considered the radiative-conductive heat transfer in a scattering and absorbing medium bounded by two reflecting and radiating plane surfaces. The existence and uniqueness of a solution to this problem are established by using an iterative procedure.

 In \cite{Laitinen2001}, M. Laitinen and T. Tiihonen studied the well-posedness of a class of models describing heat transfer by conduction and radiation in the stationary case. This theory covers different types of grey materials: semitransparent and opaque bodies as well as isotropic or non-isotropic scattering/ reflection. They also revealed that the material properties do not depend on the wavelength of the radiation.
 
In this paper, we consider the coupled system of nonlinear partial differential equations in  three-dimensional case. In previous studies, we found theoretical of existence and uniqueness in  one-dimensional case. Indeed, in the Kelley's paper  \cite{kelley_existence_1996},  the authors considered a steady-state combined radiative-conductive heat transfer. In  Asllanaj et al.\cite{asllanaj_existence_2003} the authors generalized the Kelley's study and they proved the existence and uniqueness of the 1-D system of coupled radiative conductive in the steady state associated to the nonhomogeneous Dirichlet boundary with the black surfaces. The medium is assumed to be a non-grey anisotropic absorbing, emitting, scattering,  with axial symmetry and nonhomogeneous. They considered a nonlinear conduction equation due to the temperature dependence of the thermal conductivity. However, the approach developed by Asllanaj et al. \cite{asllanaj_existence_2003} is just adaptable to 1D dimensional geometry.

We can also find in the literature some results in multidimensional case, see \cite{Amosov_1977,Amosov_2005,Amosov_1980, Amosov1979, Amosov19791,Amosov113, Amosov_2016}. For example in \cite{ Amosov1979, Amosov19791} the authors considered three-dimensional stationary case. Recently, A.A. Amosov \cite{Amosov_2016} proves a result on the unique solvability of a nonstationary problem of radiative-conductive heat transfer in a system of semitransparent bodies (3D case) for the homogeneous conductive boundary conditions. The radiation transfer equation is associated with boundary conditions of mirror reflection and refraction according to the Fresnel laws is used to describe the propagation of radiation.

 Moreover, M.M. Porzio and \'O. L\'opez Pouso proved in  \cite{Pouso1} an existence and uniqueness theorem for the non-grey coupled convection-conduction-radiation system associated to the mixed nonhomogenous Dirichlet and homogenous Neumann boundary conditions by means of accretive operators theory. Leaving aside the grey or non-grey character, the main difference between our problem and the one studied in \cite{Pouso1} is that we do not include the transient term in the RTE. This is an interesting point because this term is really negligible in a wide range of applications; e.g., thermoforming glass see \cite{asllanaj_transient_2007,Ghattassi_20151} and references therein. Moreover, the techniques used in \cite{Pouso1} do not allow disregarding it. In our study, we also discuss different types of boundary conditions.
 
 In this paper, we prove the existence and uniqueness of solution for the nonlinear radiative conductive system in 3-dimensional case associated to the nonhomogeneous Dirichlet boundary conditions for radiation equation and for different types of conductive boundary conditions. The Banach fixed point theorem is the principal tool used to solve this problem.

Recently, some attention has been accorded to numerical methods to study the radiative transfer and the nonlinear radiative-conductive heat transfer problem including optimal control problems, for more details see \cite{Asllanaj_N2001, asllanaj_2004, asllanaj_existence_2003, Lopez_20031, Lopez_2003, Pinnau_2010, Ghattassi_20151, Ghattassi_20151AN, mishra_analysis_2013, Pinnau_2008, Pinnau_2007,Pinnau_2004, Mishra2007,Kovtanyuk_2015, Kovtanyuk_2016, Ghattassisiam} and the pioneering book \cite{consiglieri2011mathematical} and references therein. Asllanaj et al. \cite{asllanaj_transient_2007} simulated transient heat transfer by radiation and conduction in two-dimensional complex shaped domains with structured and unstructured triangular meshes working with an absorbing, emitting and non-scattering grey medium.

The plan of this paper is as follows: Section \ref{sec2}, contains the statement of the main result (theorem \ref{thmintro}). Section \ref{Localexisection} is devoted to its proof based on Banach fixed point theorem.

\section{Main results}\label{sec2}
In order to state the main result, we introduce the following notations
\begin{equation*}
\begin{aligned}
L^{p}({\bm{Q_{\tau}}})&:=L^{p}(0,{\bm{\tau}};L^{p}(\Omega))~~\mbox{for all}~~~p\in [1,\infty[,\\
W_{p}^{2,1}({\bm{Q_{\tau}}})&:=\{\phi~~~\text{such that}~~\phi, \phi_{t},\phi_{x_{i}}, \phi_{x_{i},x_{j}}\in L^{p}({\bm{Q_{\tau}}})\}~\forall~~p\in [1,\infty[.
\end{aligned}
\end{equation*}
According to the method introduced in \cite{cessenat_Theoremes_1984,cessenat_Theoremes_1985, dautrayLions2000} to solve the neutron equations, we consider the following space
\begin{equation*}
{\bm{\mathcal{W}}}^{2}=\{v\in L^{2}({\bm{\mathcal{X}}})~~\text{such that}~{\bm{\beta}}.\nabla_{x} v \in L^{2}( {\bm{\mathcal{X}}})\}
\end{equation*}
and the following subset of  $\partial\Omega \times {\bm{\mathcal{S}}}^{2}$
\begin{equation*}
\partial\Omega_{+}= \{(x,{\bm{\beta}}) \in  \partial\Omega \times {\bm{\mathcal{S}}}^{2}~~\mbox{and}~~{\bm{\beta}}.{\bf n}>0\}.
\end{equation*}
 We denote by 
  $$L^{2}=L^{2}({\bm{\mathcal{X}}}),~L^{2}_{-}=L^{2}(\partial\Omega_{-}; -{\bm{\beta}}.{\bf n}\,\,d{\bm{\Gamma}}d{\bm{\beta}})$$
 and
  $$L^{2}_{+}=L^{2}(\partial\Omega_{+}; {\bm{\beta}}.{\bf n}\,\, d{\bm{\Gamma}} d{\bm{\beta}}),$$ the spaces of square integrable functions  in $ {\bm{\mathcal{X}}}, \partial\Omega _ {-} $ and $ \partial\Omega_{+}$, respectively.
 Let us denote by ${\bm{\mathcal{W}}}$ the following subset of ${\bm{\mathcal{W}}}^{2}$:
\begin{equation}\nonumber
{\bm{\mathcal{W}}}=\{v\in {\bm{\mathcal{W}}}^{2}~~\text{such that}~v_{|_{\partial\Omega_{+}}}\in L^{2}_{+}\}.
\end{equation}
The space  ${\bm{\mathcal{W}}}$ is a Hilbert space when is equipped with the scalar product
\begin{equation*}
(u,v)_{{\bm{\mathcal{W}}}}=\int_{{\bm{\mathcal{X}}}} uv  d{\bm{x}}d{\bm{\beta}} + \int_{{\bm{\mathcal{X}}}}({\bm{\beta}}.\nabla_{{\bm{x}}} u)({\bm{\beta}}.\nabla_{x} v)  d{\bm{x}}d{\bm{\beta}}+ \int_{\partial\Omega_{+}}({\bm{\beta}}.{\bf n}) u v d{\bm{\Gamma}}d{\bm{\beta}}
\end{equation*}
and the  norm
\begin{equation*}
\Big\|u\Big\|_{{\bm{\mathcal{W}}}}=\left(\Big\|u\Big\|^{2}_{L^{2}}+\Big\|{\bm{\beta}}.\nabla_{x} u\Big\|^{2}_{L^{2}}+ \Big\|u\Big\|^{2}_{L^{2}_{+}}\right)^{\frac{1}{2}}.
\end{equation*}
Our result will be obtained under the following assumptions
\begin{equation}
\begin{aligned}
&\ast\,\, I_{b}\in L^{2}(0,{\bm{\tau}};L^{2}_{-})\cap C^{2}(0,\infty;C^{1}(\partial\Omega_{-}))~\text{is nonnegative},\\
&\ast\,\, g\in W_{\infty}^{2,1}((0,\infty)\times\overline{\Omega})\cap C^{2}(0,\infty;C^{1}(\overline{\Omega}))~\text{is nonnegative},\\
&\ast\,\, T _{0}~\text{is nonnegative, belongs to}~H^{1}(\Omega),\\
&\ast\,\, \text{ In the case of Dirichlet boundary conditions, we assume that } {T_{0}}_{|_{\partial \Omega}}=b^{-1}{g(0,.)}_{|_{\partial \Omega}}.
\end{aligned}
\label{assumptions}
\end{equation}
Here and throughout this paper, we shall use $C(...)$ to denote several positive constants depending on what is enclosed in the bracket.
The main result of this paper is the following theorem.
\begin{thm}
Assume that the data verifies  (\ref{assumptions}). Let ${\bm{\tau}}>0$, there exists $\delta=\delta(\Omega, {\bm{\tau}}, \theta)>0$ such that if
$$
\Big\|T_{0}\Big\|_{H^{1}(\Omega)}\leqslant \delta,\,\,\,\,\,\Big\| g \Big\|_{L^{2}(0,{\bm{\tau}};H^{\frac{3}{2}}(\Omega))}\leqslant \delta\,\,\text{and}\,\,\Big\|I_{b}\Big\|_{L^{2}(0,{\bm{\tau}};L^{2}_{-})}\leqslant \delta,
$$
then the problem (\ref{radiatif2})-(\ref{chaleur5}) has a unique nonnegative solution $(T,I)$ such that  $T\in W_{2}^{2,1}({\bm{Q_{\tau}}})$ and  $I \in L^{2}(0,{\bm{\tau}};{\bm{\mathcal{W}}})$. Moreover, there exists $C(\Omega, {\bm{\tau}}, \theta)>0$ such that
\begin{equation}
\Big\|T\Big\|_{W_{2}^{2,1}({\bm{Q_{\tau}}})}\leqslant C(\Omega, {\bm{\tau}}, \theta)\left(\Big\|I_{b} \Big\|_{L^{2}(0,{\bm{\tau}};L^{2}_{-})}+\Big\|T_{0}\Big\|_{H^{1}(\Omega)}+\Big\| g \Big\|_{L^{2}(0,{\bm{\tau}};H^{\frac{3}{2}}(\Omega))} \right).
\label{LYInj1}
\end{equation}
\label{thmintro}
\end{thm}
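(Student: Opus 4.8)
The plan is to recast (\ref{radiatif2})--(\ref{chaleur5}) as a fixed-point equation $T=\Phi(T)$ on a small closed ball of $W_{2}^{2,1}({\bm{Q_{\tau}}})$ and to apply the Banach fixed point theorem. Given $\bar T$ in such a ball, I would define $\Phi(\bar T)$ in three steps: (i) solve the linear radiative transfer equation $I+{\bm{\beta}}.\nabla_{\bm{x}}I=\bar T^{4}$ on $(0,{\bm{\tau}})\times{\bm{\mathcal{X}}}$ with inflow datum $I_{b}$ on $(0,{\bm{\tau}})\times\partial\Omega_{-}$ --- for a.e.\ fixed $t$ this is a stationary transport problem, uniquely solvable in ${\bm{\mathcal{W}}}$ with $\|I(t)\|_{{\bm{\mathcal{W}}}}\le C(\|\bar T^{4}(t)\|_{L^{2}(\Omega)}+\|I_{b}(t)\|_{L^{2}_{-}})$ by the Cessenat-type theory recalled above (cf.\ \cite{cessenat_Theoremes_1984,cessenat_Theoremes_1985,dautrayLions2000}); (ii) set $G=\int_{{\bm{\mathcal{S}}}^{2}}I\,d{\bm{\beta}}$, so that by Cauchy--Schwarz on ${\bm{\mathcal{S}}}^{2}$ one has $\|G\|_{L^{2}({\bm{Q_{\tau}}})}\le C\|I\|_{L^{2}(0,{\bm{\tau}};L^{2})}$; (iii) solve the linear parabolic initial--boundary value problem $\partial_{t}T-\Delta T=\theta G-4\pi\theta\bar T^{4}$ in ${\bm{Q_{\tau}}}$, $a\partial_{n}T+bT=g$ on ${\bm{\Sigma_{{\bm{\tau}}}}}$, $T(0,\cdot)=T_{0}$, which by $L^{2}$-maximal parabolic regularity admits a unique solution $T\in W_{2}^{2,1}({\bm{Q_{\tau}}})$ with $\|T\|_{W_{2}^{2,1}({\bm{Q_{\tau}}})}\le C\big(\|\theta G-4\pi\theta\bar T^{4}\|_{L^{2}({\bm{Q_{\tau}}})}+\|T_{0}\|_{H^{1}(\Omega)}+\|g\|_{L^{2}(0,{\bm{\tau}};H^{3/2}(\Omega))}\big)$ (in the Dirichlet case the compatibility condition in (\ref{assumptions}) is exactly what makes $T_{0}$ admissible). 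A pair $(T,I)$ solves (\ref{radiatif2})--(\ref{chaleur5}) iff $T$ is a fixed point of $\Phi$ and $I=I(T)$.

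The first technical point is the nonlinear term: since $\Omega\subset\mathbb{R}^{3}$, the anisotropic Sobolev embedding gives $W_{2}^{2,1}({\bm{Q_{\tau}}})\hookrightarrow L^{10}({\bm{Q_{\tau}}})\hookrightarrow L^{8}({\bm{Q_{\tau}}})$, hence $\|\bar T^{4}\|_{L^{2}({\bm{Q_{\tau}}})}=\|\bar T\|_{L^{8}({\bm{Q_{\tau}}})}^{4}\le C\|\bar T\|_{W_{2}^{2,1}({\bm{Q_{\tau}}})}^{4}$. Chaining the three estimates above yields $\|\Phi(\bar T)\|_{W_{2}^{2,1}({\bm{Q_{\tau}}})}\le C_{1}\theta\|\bar T\|_{W_{2}^{2,1}({\bm{Q_{\tau}}})}^{4}+C_{2}\big(\|I_{b}\|_{L^{2}(0,{\bm{\tau}};L^{2}_{-})}+\|T_{0}\|_{H^{1}(\Omega)}+\|g\|_{L^{2}(0,{\bm{\tau}};H^{3/2}(\Omega))}\big)$ with $C_{1},C_{2}=C(\Omega,{\bm{\tau}})$. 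Choosing the radius $R$ so that $C_{1}\theta R^{4}\le R/2$ (i.e.\ $R\sim(C_{1}\theta)^{-1/3}$) and then $\delta$ so that $3C_{2}\delta\le R/2$ makes $\Phi$ map the ball $\overline{B}_{R}=\{\bar T:\|\bar T\|_{W_{2}^{2,1}({\bm{Q_{\tau}}})}\le R\}$ into itself; both $R$ and $\delta$ depend only on $\Omega,{\bm{\tau}},\theta$. Nonnegativity of the iterate needs an additional argument: $\bar T^{4}\ge0$ and $I_{b}\ge0$ force $I(\bar T)\ge0$ (method of characteristics), hence $G\ge0$, but the parabolic source $\theta G-4\pi\theta\bar T^{4}$ is not sign-definite; I would secure $T\ge0$ by replacing, in step (iii), $4\pi\theta\bar T^{4}$ with the monotone nonlinearity $4\pi\theta|T|^{3}T$ and solving the semilinear problem $\partial_{t}T-\Delta T+4\pi\theta|T|^{3}T=\theta G$, for which $0$ is a subsolution (since $g,T_{0}\ge0$ and $G\ge0$), so the parabolic comparison principle gives $T\ge0$, whence $|T|^{3}T=T^{4}$ and $T$ solves the original equation. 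One then works on $\overline{B}_{R}^{+}=\overline{B}_{R}\cap\{T\ge0\}$, which is closed, hence complete.

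For the contraction, take $\bar T_{1},\bar T_{2}\in\overline{B}_{R}^{+}$ and write $\bar T_{1}^{4}-\bar T_{2}^{4}=(\bar T_{1}-\bar T_{2})(\bar T_{1}^{3}+\bar T_{1}^{2}\bar T_{2}+\bar T_{1}\bar T_{2}^{2}+\bar T_{2}^{3})$; by H\"older ($\tfrac12=\tfrac18+\tfrac38$) and the embedding, $\|\bar T_{1}^{4}-\bar T_{2}^{4}\|_{L^{2}({\bm{Q_{\tau}}})}\le C\big(\|\bar T_{1}\|_{W_{2}^{2,1}({\bm{Q_{\tau}}})}^{3}+\|\bar T_{2}\|_{W_{2}^{2,1}({\bm{Q_{\tau}}})}^{3}\big)\|\bar T_{1}-\bar T_{2}\|_{W_{2}^{2,1}({\bm{Q_{\tau}}})}\le CR^{3}\|\bar T_{1}-\bar T_{2}\|_{W_{2}^{2,1}({\bm{Q_{\tau}}})}$. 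The differences $I(\bar T_{1})-I(\bar T_{2})$ and $G_{1}-G_{2}$ satisfy the corresponding linear equations with zero inflow, whence $\|G_{1}-G_{2}\|_{L^{2}({\bm{Q_{\tau}}})}\le CR^{3}\|\bar T_{1}-\bar T_{2}\|_{W_{2}^{2,1}({\bm{Q_{\tau}}})}$; testing the (monotone) parabolic equation satisfied by $\Phi(\bar T_{1})-\Phi(\bar T_{2})$, whose data vanish, and using that the nonlinear contribution has the favourable sign and is itself $O(\theta R^{3})$ in $L^{2}({\bm{Q_{\tau}}})$ (so it is absorbed on the left via maximal regularity), one gets $\|\Phi(\bar T_{1})-\Phi(\bar T_{2})\|_{W_{2}^{2,1}({\bm{Q_{\tau}}})}\le C_{3}\theta R^{3}\|\bar T_{1}-\bar T_{2}\|_{W_{2}^{2,1}({\bm{Q_{\tau}}})}$. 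Shrinking $R$ (and $\delta$ accordingly) so that $C_{3}\theta R^{3}<1$, $\Phi$ is a contraction of $\overline{B}_{R}^{+}$; the Banach fixed point theorem then yields a unique fixed point $T$, hence a unique nonnegative solution $(T,I)$ with $T\in W_{2}^{2,1}({\bm{Q_{\tau}}})$ and $I\in L^{2}(0,{\bm{\tau}};{\bm{\mathcal{W}}})$, and (\ref{LYInj1}) is precisely the self-mapping bound evaluated at the fixed point.

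I expect the main obstacle to be the nonlinear radiative coupling: one must fold $T^{4}$ into a genuine contraction, which is possible only because $W_{2}^{2,1}({\bm{Q_{\tau}}})\hookrightarrow L^{10}({\bm{Q_{\tau}}})$ in space dimension three puts $T^{4}$ and $T^{4}-S^{4}$ in $L^{2}({\bm{Q_{\tau}}})$ with the right cubic factor --- a crude $L^{\infty}$ estimate being unavailable, since $W_{2}^{2,1}({\bm{Q_{\tau}}})$ does not embed into $L^{\infty}$. The two subsidiary difficulties are the bookkeeping of constants so that the self-map and the contraction conditions hold for the \emph{same} ball (this fixes the explicit threshold $\delta=\delta(\Omega,{\bm{\tau}},\theta)$), and the nonnegativity of $T$, for which the indefiniteness of the source $\theta G-4\pi\theta T^{4}$ has to be circumvented, e.g.\ through the monotone reformulation and the parabolic comparison principle described above.
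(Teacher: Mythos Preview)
Your approach is essentially the same as the paper's: a Banach fixed-point argument on a small ball of $W_{2}^{2,1}({\bm{Q_{\tau}}})$, with the composite map ``$\bar T \mapsto \bar T^{4}\mapsto I \mapsto G \mapsto T$'' and the key embedding $W_{2}^{2,1}({\bm{Q_{\tau}}})\hookrightarrow L^{8}({\bm{Q_{\tau}}})$ controlling the quartic nonlinearity. Two organisational differences are worth noting. First, the paper defines the third step $\mathcal{H}_{3}$ from the outset as the \emph{semilinear} problem $\partial_{t}T-\Delta T+4\pi\theta T^{4}=\theta G$ (your monotone reformulation $|T|^{3}T$ is exactly this for nonnegative $T$); your initial linear choice $\partial_{t}T-\Delta T=\theta G-4\pi\theta\bar T^{4}$ is a detour, since---as you observe---nonnegativity then fails and you are forced to switch. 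Second, for nonnegativity the paper truncates the nonlinearity and multiplies by $-\bar T^{-}$, whereas you invoke a subsolution/comparison argument; these are equivalent in spirit.

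One small remark on the contraction: your phrase ``the nonlinear contribution has the favourable sign'' is a bit misleading at the level of the $W_{2}^{2,1}$ estimate, where sign alone does not help. What actually closes the loop (and what the paper also relies on, somewhat tersely) is that the difference $\Phi(\bar T_{1})-\Phi(\bar T_{2})$ solves a linear parabolic problem whose right-hand side contains both $\theta(G_{1}-G_{2})$ and $4\pi\theta\big(\Phi(\bar T_{1})^{4}-\Phi(\bar T_{2})^{4}\big)$; since $\Phi(\bar T_{i})\in\overline{B}_{R}$, the latter term is $O(\theta R^{3})\|\Phi(\bar T_{1})-\Phi(\bar T_{2})\|_{W_{2}^{2,1}}$ in $L^{2}$ and is absorbed on the left for $R$ small. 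With that clarification your argument is correct, and arguably more explicit than the paper's on this point.
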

\begin{remark}
Theorem \ref{thmintro} shows the local existence and uniqueness of the solution for problem (\ref{radiatif2})-(\ref{chaleur5}). Note that if the initial datum is sufficiently small, we have the global existence and uniqueness results.
 \end{remark}
The existence result derives from the application of the Banach fixed point theorem to a suitable map $\mathcal{H}$. The next section is devoted to the construction of this map, composed of three continuous functions. Moreover, since $\mathcal{H}$ is a contraction in a well-chosen set, we deduce the uniqueness of the solution.
\section{Existence and uniqueness of solution for the coupled system}\label{Localexisection}
In this section, we show that the existence of a solution $T$, and implicitly the existence of a solution $I$, of the coupled system of equations $(\ref{radiatif2})$-$(\ref{chaleur5})$ is related to the existence of a solution of a  fixed point problem. We will apply the fixed point theorem to a well-chosen map $\mathcal{H}$. To do so, we must show that this map $\mathcal{H}$ has a unique solution $T$.

The map $\mathcal{H}: W_{2}^{2,1}({\bm{Q_{\tau}}}) \longrightarrow W_{2}^{2,1}({\bm{Q_{\tau}}})$ is a composition of three maps
\begin{equation*}
\mathcal{H}=\mathcal{H}_{3}\circ \mathcal{H}_{2} \circ \mathcal{H}_{1}.
\end{equation*}

The map $\mathcal{H}_{1}$: $W_{2}^{2,1}({\bm{Q_{\tau}}}) \longrightarrow L^{2}({\bm{Q_{\tau}}})$ is defined as follows, for  $T \in W_{2}^{2,1}({\bm{Q_{\tau}}})$,  $T^{4}=\mathcal{H}_{1}(T) \in L^{2}({\bm{Q_{\tau}}})$. On the other hand, the map $\mathcal{H}_{2}:L^{2}({\bm{Q_{\tau}}}) \longrightarrow L^{2}({\bm{Q_{\tau}}})$ is defined as follows, for  $T^{4}\in L^{2}({\bm{Q_{\tau}}})$,  $I=\mathcal{H}_{2}(T^{4}) \in L^{2}({\bm{Q_{\tau}}})$. Finally, the map $\mathcal{H}_{3}: L^{2}({\bm{Q_{\tau}}}) \longrightarrow W_{2}^{2,1}({\bm{Q_{\tau}}})$  is defined as follows, for $G \in L^{2}({\bm{Q_{\tau}}}) $,  $\mathcal{H}_{3}(G) \in W_{2}^{2,1}({\bm{Q_{\tau}}})$ is the solution of CE \eqref{chaleur},\eqref{CBchaleur} and \eqref{chaleur5}.

To study $\mathcal{H}$, we will be studying in great detail the maps $\mathcal{H}_{1}$, $\mathcal{H}_{2}$ and $\mathcal{H}_{3}$. 
\subsection{The maps $\mathcal{H}_{1}$ and $\mathcal{H}_{2}$}
Now, we focus on the maps $\mathcal{H}_{1}$ and $\mathcal{H}_{2}$, we give some properties of the solution of the RTE  (\ref{radiatif2}) using nonhomogeneous radiative Dirichlet boundary conditions.

We start by recalling the Green's formula, see \cite{dautrayLions2000}:
\begin{equation}
\int_{{\bm{\mathcal{X}}}} ({\bm{\beta}}.\nabla_{{\bm{x}}} u)v d{\bm{x}}d{\bm{\beta}}+ \int_{{\bm{\mathcal{X}}}}({\bm{\beta}}.\nabla_{{\bm{x}}} v) u d{\bm{x}} d{\bm{\beta}} = \int_{\partial\Omega \times  {\bm{\mathcal{S}}}^{2}}({\bm{\beta}}.{\bf n}) u v d{\bm{\Gamma}}d{\bm{\beta}},
\label{formulegreen}
\end{equation}
for all $(u,v)\in {\bm{\mathcal{W}}}\times{\bm{\mathcal{W}}}$.
\begin{thm}
Let us consider $T\in W_{2}^{2,1}({\bm{Q_{\tau}}})$. Under the assumptions (\ref{assumptions}), the problem (\ref{radiatif2}), (\ref{radiativeboundary}) has a unique nonnegative solution $I\in L^{2}(0,{\bm{\tau}};{\bm{\mathcal{W}}})$. Moreover, there exists $C_{1}=C(\tau, \Omega)>0$ such that 
\begin{equation}
\begin{aligned}
\Big\|I\Big\|_{L^{2}(0,{\bm{\tau}};{\bm{\mathcal{W}}})}\leqslant C_{1}\left(\Big\|T\Big\|^{4}_{W_{2}^{2,1}({\bm{Q_{\tau}}})}+\Big\|I_{b}\Big\|_{L^{2}(0,{\bm{\tau}};L^{2}_{-})}\right).
\end{aligned}
\label{GL22}
\end{equation}
\label{thsolrad}
\end{thm}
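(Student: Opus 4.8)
The plan is to solve the transport equation \eqref{radiatif2} with the inflow boundary condition \eqref{radiativeboundary} by the method of characteristics, exploiting the fact that $\Omega$ is convex with $C^2$ boundary, so that each characteristic line $s\mapsto {\bm{x}}-s{\bm{\beta}}$ enters $\Omega$ through $\partial\Omega_-$ and the backward exit time $\tau_-({\bm{x}},{\bm{\beta}})$ is well defined and bounded by $\mathrm{diam}(\Omega)$. First I would write down the explicit Duhamel-type representation
\begin{equation*}
I(t,{\bm{x}},{\bm{\beta}})=I_b\bigl(t,{\bm{x}}-\tau_-({\bm{x}},{\bm{\beta}}){\bm{\beta}},{\bm{\beta}}\bigr)e^{-\tau_-({\bm{x}},{\bm{\beta}})}+\int_0^{\tau_-({\bm{x}},{\bm{\beta}})}e^{-s}\,T^4\bigl(t,{\bm{x}}-s{\bm{\beta}}\bigr)\,ds,
\end{equation*}
treating $t$ as a parameter since there is no transient term in the RTE. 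Nonnegativity of $I$ is then immediate: $I_b\geqslant 0$ by \eqref{assumptions} and $T^4\geqslant 0$ pointwise, and the exponential kernels are positive. Uniqueness follows because the difference of two solutions satisfies the homogeneous transport equation with zero inflow data, which by Green's formula \eqref{formulegreen} (applied to $u=v=$ the difference) forces the $L^2$ norm to vanish.

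Next I would establish the a priori estimate \eqref{GL22}. The natural route is the variational/energy identity: multiply \eqref{radiatif2} by $I$, integrate over ${\bm{\mathcal{X}}}$, and use \eqref{formulegreen} to get
\begin{equation*}
\Big\|I\Big\|_{L^2}^2+\tfrac12\Big\|I\Big\|_{L^2_+}^2=\int_{{\bm{\mathcal{X}}}}T^4 I\,d{\bm{x}}d{\bm{\beta}}+\tfrac12\Big\|I_b\Big\|_{L^2_-}^2,
\end{equation*}
for a.e. $t$, where the boundary term splits into its incoming and outgoing parts. Applying Cauchy–Schwarz and Young's inequality to absorb $\|I\|_{L^2}$ gives a bound on $\|I(t)\|_{L^2}^2+\|I(t)\|_{L^2_+}^2$ in terms of $\|T^4(t)\|_{L^2(\Omega)}^2$ and $\|I_b(t)\|_{L^2_-}^2$. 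Since ${\bm{\beta}}.\nabla_{\bm{x}}I=T^4-I$ directly from the equation, the $\mathcal{W}$-seminorm is controlled by the same quantities. Integrating in $t$ over $(0,{\bm{\tau}})$ yields
\begin{equation*}
\Big\|I\Big\|_{L^2(0,{\bm{\tau}};{\bm{\mathcal{W}}})}^2\leqslant C\Bigl(\big\|T^4\big\|_{L^2({\bm{Q_\tau}})}^2+\big\|I_b\big\|_{L^2(0,{\bm{\tau}};L^2_-)}^2\Bigr).
\end{equation*}
It then remains to dominate $\|T^4\|_{L^2({\bm{Q_\tau}})}$ by $\|T\|_{W_2^{2,1}({\bm{Q_\tau}})}^4$, after which taking square roots and using $\sqrt{a^2+b^2}\leqslant a+b$ gives \eqref{GL22} with $C_1=C(\tau,\Omega)$.

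The main obstacle, and the step deserving the most care, is precisely this last nonlinear embedding: showing $T^4\in L^2({\bm{Q_\tau}})$ with $\|T^4\|_{L^2({\bm{Q_\tau}})}\leqslant C(\Omega,{\bm{\tau}})\|T\|_{W_2^{2,1}({\bm{Q_\tau}})}^4$. Since $\|T^4\|_{L^2({\bm{Q_\tau}})}=\|T\|_{L^8({\bm{Q_\tau}})}^4$, what is needed is the parabolic Sobolev embedding $W_2^{2,1}({\bm{Q_\tau}})\hookrightarrow L^8({\bm{Q_\tau}})$ in the three-dimensional spatial setting. This is the borderline anisotropic embedding for $W_2^{2,1}$ on a space-time cylinder in dimension $3$ (the parabolic dimension being $n+2=5$, and $2\cdot 5/(5-2\cdot 2)=10\geqslant 8$), so it holds with a continuous embedding constant depending only on $\Omega$ and ${\bm{\tau}}$; I would cite the standard Ladyzhenskaya–Solonnikov–Uraltseva theory for this. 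A secondary technical point is making the energy identity rigorous: $I\in L^2(0,{\bm{\tau}};{\bm{\mathcal{W}}})$ has enough regularity for \eqref{formulegreen}, but one should justify existence in $\mathcal{W}$ first — either by the characteristics formula above combined with the trace theory of $\mathcal{W}$ from \cite{cessenat_Theoremes_1984,cessenat_Theoremes_1985,dautrayLions2000}, or by a direct Lax–Milgram argument on the bilinear form $a(u,v)=\int_{{\bm{\mathcal{X}}}}(u+{\bm{\beta}}.\nabla_{\bm{x}}u)v+\frac12\int_{\partial\Omega_+}({\bm{\beta}}.{\bf n})uv$, which is coercive on $\mathcal{W}$ with the lifted inflow data handled in the usual way.
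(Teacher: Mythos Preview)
Your proposal is correct and reaches the same conclusion as the paper, but the route differs in two visible ways. The paper does not work with the characteristics formula or with a single energy identity on $I$; instead it decomposes $I=I_0+w$, where $I_0$ solves \eqref{radiatif2} with zero inflow data and $w$ solves it with zero source. It then runs separate energy estimates on each piece (multiplying $I_0$ by $I_0$ and by $I_0+{\bm{\beta}}.\nabla_x I_0$, and $w$ by $w$ and by ${\bm{\beta}}.\nabla_x w$), adds, integrates in time, and invokes the same embedding $W_2^{2,1}({\bm{Q_\tau}})\hookrightarrow L^8({\bm{Q_\tau}})$ that you identify. Nonnegativity is obtained by citing a maximum principle for transport equations rather than reading it off the Duhamel representation. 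Your direct approach is slightly more economical---no splitting, and the explicit formula gives positivity and existence in one stroke---while the paper's decomposition makes the constants ($\sqrt{4\pi}$, $1/\sqrt{2}$) appear more transparently and keeps the boundary contribution isolated from the source contribution. Both arguments rest on exactly the same ingredients: Green's formula \eqref{formulegreen}, the transport theory in \cite{dautrayLions2000}, and the Ladyzhenskaya--Solonnikov--Ural'tseva embedding.
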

 \begin{proof}
Let   $T\in W_{2}^{2,1}({\bm{Q_{\tau}}})$, $t\in [0,{\bm{\tau}}]$, we have $T^{4}(t) \in L^{2}(\Omega)$. Using the existence and uniqueness of the solution for the transport equation, see \cite{dautrayLions2000}, the boundary value problem $(\ref{radiatif2}),(\ref{radiativeboundary})$ has a unique solution $I(t)\in L^{2}({\bm{\mathcal{X}}})$.

In addition, using the linearity of $(\ref{radiatif2})$, the solution $I$ of the problem $(\ref{radiatif2}),(\ref{radiativeboundary})$ is given by $I = I_{0} + w$ where $I_ {0}$ is a solution of $(\ref{radiatif2})$, $(\ref{radiativeboundary})$  for $I_{b}\equiv 0$  and $ w $ is a solution of $(\ref{radiatif2})$, $(\ref{radiativeboundary})$ without the second member $T^{4}$.

 We start by the homogeneous problem
\begin{align}
{\bm{\beta}}.\nabla_{x} I_{0}(t,{\bm{x}},{\bm{\beta}}) + I_{0}(t,{\bm{x}},{\bm{\beta}})&=T^{4}(t,{\bm{x}})~~~~\forall (t,{\bm{x}},{\bm{\beta}}) \in (0,{\bm{\tau}})\times{\bm{\mathcal{X}}}\label{equadem2}\\
I_{0}(t,{\bm{x}},{\bm{\beta}})&=0 ~~~~~~~~~~~~~~~\forall (t,{\bm{x}},{\bm{\beta}})\in \partial\Omega_{-}\label{equadem2FF}.
\end{align}
If we multiply the equation $(\ref{equadem2})$ by $I_{0}$, we integrate in space and in direction and we use Green's formula ($\ref{formulegreen}$) and (\ref{equadem2FF}) to get

\begin{equation}
\begin{aligned}
\Big\|I_{0}(t)\Big\|_{L^{2}} \leqslant \sqrt{4\pi} \Big\|T^{4}(t)\Big\|_{L^{2}(\Omega)}.
\label{L21}
\end{aligned}
\end{equation}
If we multiply $(\ref{equadem2})$ by $I_{0}+{\bm{\beta}}.\nabla_{x} I_{0}$ and we integrate in  space and in direction, we obtain

\begin{equation}
\Big\|I_{0}(t)\Big\|_{{\bm{\mathcal{W}}}} \leqslant \sqrt{4\pi}\Big\|T^{4}(t)\Big\|_{L^{2}(\Omega)}.
\label{equaestim1}
\end{equation} 
Now, we study the nonhomogeneous boundary value problem:
\begin{align}
{\bm{\beta}}.\nabla_{x} w(t,{\bm{x}},{\bm{\beta}}) + w(t,{\bm{x}},{\bm{\beta}})&=0~~~~~~~~~~~~~~\forall (t,{\bm{x}},{\bm{\beta}}) \in (0,{\bm{\tau}})\times{\bm{\mathcal{X}}}\label{equa444}\\
w(t,{\bm{x}},{\bm{\beta}})&=I_{b}(t,{\bm{x}},{\bm{\beta}})~~~\forall (t,{\bm{x}},{\bm{\beta}}) \in (0,{\bm{\tau}})\times\partial\Omega_{-}.
\end{align}
Multiplying by  $ w $ and  integrating in ${\bm{\mathcal{X}}}$, we find that 
\begin{equation}
\Big\|w(t)\Big\|_{L^{2}}\leqslant \frac{1}{\sqrt{2}}\Big\|I_{b}(t)\Big\|_{L^{2}_{-}}.
\label{L22}
\end{equation}
If we multiply $(\ref{equa444})$ by ${\bm{\beta}}.\nabla_{x} w$ and we integrate in ${\bm{\mathcal{X}}}$, we obtain
\begin{equation}
\Big\|w(t)\Big\|_{{\bm{\mathcal{W}}}}\leqslant\Big\|I_{b}(t)\Big\|_{L^{2}_{-}}.
\label{equaestim11}
\end{equation}
Since $I=I_{0}+w$, the estimates (\ref{L21})  and (\ref{L22}) imply  
\begin{equation*}
\begin{aligned}
\Big\|I(t)\Big\|_{L^{2}}&\leqslant \sqrt{4\pi}\Big\|T^{4}(t)\Big\|_{L^{2}(\Omega)}+\frac{1}{\sqrt{2}}\Big\|I_{b}(t)\Big\|_{L^{2}_{-}}.
\end{aligned}
\end{equation*}
Finally, in a similar way, according to (\ref{equaestim1}) and (\ref{equaestim11}), we obtain
\begin{equation*}
\begin{aligned}
\Big\|I(t)\Big\|_{{\bm{\mathcal{W}}}}&\leqslant \sqrt{4\pi}\Big\|T^{4}(t)\Big\|_{L^{2}(\Omega)}+\Big\|I_{b}(t)\Big\|_{L^{2}_{-}}.
\end{aligned}
\end{equation*}
If we integrate in time  between  $0$ and ${\bm{\tau}}$, we obtain
\begin{equation*}
\begin{aligned}
&\Big\|I\Big\|_{L^{2}(0,{\bm{\tau}};L^{2})}\leqslant \sqrt{4\pi}\Big\|T\Big\|^{4}_{L^{8}({\bm{Q_{\tau}}})}+\frac{1}{\sqrt{2}}\Big\|I_{b}\Big\|_{L^{2}(0,{\bm{\tau}};L^{2}_{-})}\\
&\Big\|I\Big\|_{L^{2}(0,{\bm{\tau}};{\bm{\mathcal{W}}})}\leqslant \sqrt{4\pi}\Big\|T\Big\|^{4}_{L^{8}({\bm{Q_{\tau}}})}+\Big\|I_{b}\Big\|_{L^{2}(0,{\bm{\tau}};L^{2}_{-})}.
\end{aligned}
\end{equation*}
Then, using the continuous embedding $W_{2}^{2,1}({\bm{Q_{\tau}}})\hookrightarrow L^{8}({\bm{Q_{\tau}}})$, see \cite{ladyzenskaja_linear_1968}, there exists $C(\tau, \Omega)>0$ such that 
\begin{equation*}
\begin{aligned}
&\Big\|I\Big\|_{L^{2}(0,{\bm{\tau}};L^{2})}\leqslant C(\tau, \Omega)\left(\Big\|T\Big\|^{4}_{W_{2}^{2,1}({\bm{Q_{\tau}}})}+\Big\|I_{b}\Big\|_{L^{2}(0,{\bm{\tau}};L^{2}_{-})}\right),\\
&\Big\|I\Big\|_{L^{2}(0,{\bm{\tau}};{\bm{\mathcal{W}}})}\leqslant C(\tau, \Omega)\left(\Big\|T\Big\|^{4}_{W_{2}^{2,1}({\bm{Q_{\tau}}})}+\Big\|I_{b}\Big\|_{L^{2}(0,{\bm{\tau}};L^{2}_{-})}\right).
\end{aligned}
\end{equation*}
Using the positivity of $I_{b}$ and the maximum principle \cite{agoshkov_boundary_1998}, this implies that the solution $I$ of  $(\ref{radiatif2}), (\ref{radiativeboundary})$ is nonnegative.
 \end{proof}

\begin{proposition}
 Under the hypotheses of theorem \ref{thsolrad}, the map $\mathcal{H}_{2}o\mathcal{H}_{1}$ is a well-posed and continuous map  from $W_{2}^{2,1}({\bm{Q_{\tau}}})$ to $L^{2}({\bm{Q_{\tau}}})$. 
\end{proposition}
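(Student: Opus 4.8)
The plan is to establish the two required properties — well-posedness and continuity — by decomposing the map as $\mathcal{H}_2\circ\mathcal{H}_1$ and exploiting the quantitative estimate already granted by Theorem \ref{thsolrad}. First I would address well-posedness: for $T\in W_2^{2,1}({\bm{Q_\tau}})$, the continuous embedding $W_2^{2,1}({\bm{Q_\tau}})\hookrightarrow L^8({\bm{Q_\tau}})$ shows that $T^4=\mathcal{H}_1(T)$ belongs to $L^2({\bm{Q_\tau}})$, so $\mathcal{H}_1$ maps into $L^2({\bm{Q_\tau}})$ as claimed. Then $\mathcal{H}_2$ applied to $T^4$ returns the solution $I$ of the RTE \eqref{radiatif2}, \eqref{radiativeboundary}, which by Theorem \ref{thsolrad} lies in $L^2(0,{\bm{\tau}};{\bm{\mathcal{W}}})\subset L^2({\bm{Q_\tau}})$; estimate \eqref{GL22} gives the bound $\|I\|_{L^2(0,{\bm{\tau}};{\bm{\mathcal{W}}})}\leqslant C_1(\|T\|^4_{W_2^{2,1}({\bm{Q_\tau}})}+\|I_b\|_{L^2(0,{\bm{\tau}};L^2_-)})$, so the composition $\mathcal{H}_2\circ\mathcal{H}_1$ is well defined from $W_2^{2,1}({\bm{Q_\tau}})$ into $L^2({\bm{Q_\tau}})$.

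For continuity, I would treat the two factors separately. The map $\mathcal{H}_2$ is in fact \emph{affine}: since the RTE \eqref{radiatif2} is linear in $I$ and linear in its source $T^4$, for two sources $f_1,f_2\in L^2({\bm{Q_\tau}})$ the difference $I_1-I_2=\mathcal{H}_2(f_1)-\mathcal{H}_2(f_2)$ solves \eqref{equadem2} with right-hand side $f_1-f_2$ and homogeneous boundary data \eqref{equadem2FF}. Repeating verbatim the energy estimate \eqref{L21} — multiply by $I_1-I_2$, integrate in ${\bm{x}}$ and ${\bm{\beta}}$, apply Green's formula \eqref{formulegreen} — and then integrating in time gives $\|\mathcal{H}_2(f_1)-\mathcal{H}_2(f_2)\|_{L^2({\bm{Q_\tau}})}\leqslant \sqrt{4\pi}\,\|f_1-f_2\|_{L^2({\bm{Q_\tau}})}$, so $\mathcal{H}_2$ is Lipschitz, hence continuous. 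It remains to check that $\mathcal{H}_1:T\mapsto T^4$ is continuous from $W_2^{2,1}({\bm{Q_\tau}})$ to $L^2({\bm{Q_\tau}})$. Using the algebraic identity $a^4-b^4=(a-b)(a+b)(a^2+b^2)$ together with Hölder's inequality with exponents chosen so that $\tfrac12=\tfrac18+\tfrac38$, one estimates
\begin{equation*}
\|T_1^4-T_2^4\|_{L^2({\bm{Q_\tau}})}\leqslant \|T_1-T_2\|_{L^8({\bm{Q_\tau}})}\bigl(\|T_1\|_{L^8({\bm{Q_\tau}})}+\|T_2\|_{L^8({\bm{Q_\tau}})}\bigr)\bigl(\|T_1\|^2_{L^8({\bm{Q_\tau}})}+\|T_2\|^2_{L^8({\bm{Q_\tau}})}\bigr),
\end{equation*}
and then the embedding $W_2^{2,1}({\bm{Q_\tau}})\hookrightarrow L^8({\bm{Q_\tau}})$ converts this into a bound by $C\|T_1-T_2\|_{W_2^{2,1}({\bm{Q_\tau}})}$ times a factor that stays bounded on bounded subsets of $W_2^{2,1}({\bm{Q_\tau}})$. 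Thus $\mathcal{H}_1$ is locally Lipschitz, in particular continuous, and the composition $\mathcal{H}_2\circ\mathcal{H}_1$ inherits continuity.

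The only mildly delicate point is the continuity of $\mathcal{H}_1$: it is genuinely only \emph{locally} Lipschitz (the constant grows like the cube of the norm), so one must phrase the statement carefully — continuity at each point, or Lipschitz on bounded sets — rather than global Lipschitz. Everything else is a direct transcription of the estimates already proved for Theorem \ref{thsolrad} applied to the difference of two solutions, so I expect no real obstacle there.
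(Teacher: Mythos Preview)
Your proof is correct and follows the same strategy as the paper --- invoke Theorem~\ref{thsolrad} for well-posedness, then check continuity factor by factor --- only with considerably more detail (the paper's own argument is terse, and your H\"older estimate for $\|T_1^4-T_2^4\|_{L^2}$ is exactly the inequality \eqref{estimatinSemi} that the paper uses later in the proof of Theorem~\ref{thmintro}). One small correction: the inclusion $L^2(0,{\bm{\tau}};{\bm{\mathcal{W}}})\subset L^2({\bm{Q_\tau}})$ you write is not literally true, since functions in ${\bm{\mathcal{W}}}$ depend also on ${\bm{\beta}}$; in the paper's proof $\mathcal{H}_2(T^4)$ is taken to be the incident radiation $G=\int_{{\bm{\mathcal{S}}}^2} I\,d{\bm{\beta}}\in L^2({\bm{Q_\tau}})$ rather than $I$ itself, and your Lipschitz estimate on $I$ passes to $G$ at once via Cauchy--Schwarz over ${\bm{\mathcal{S}}}^2$.
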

\begin{proof}
Let $T\in W_{2}^{2,1}({\bm{Q_{\tau}}})$, theorem \ref{thsolrad} implies that  the map $\mathcal{H}_{2}$ is a well-posed and continuous map  from $W_{2}^{2,1}({\bm{Q_{\tau}}})$ to $L^{2}({\bm{Q_{\tau}}})$. 

The map $\mathcal{H}_{2}$ is defined from $L^{2}({\bm{Q_{\tau}}})$ to $L^{2}({\bm{Q_{\tau}}})$ by
$$
\mathcal{H}_{2}(T^{4})=G
$$
where $G$ is given by \eqref{moyenne}.

From theorem \ref{thsolrad} and using (\ref{moyenne}) we can deduce that $\mathcal{H}_{2}$ is a well-posed and continuous map  from $L^{2}({\bm{Q_{\tau}}})$ to $L^{2}({\bm{Q_{\tau}}})$. 
Moreover, there exists $C_{2}=C(\tau, \Omega)>0$ such that 
\begin{equation}
\Big\|G\Big\|_{L^{2}({\bm{Q_{\tau}}})}\leqslant C_{2}\left(\Big\|T\Big\|^{4}_{W_{2}^{2,1}({\bm{Q_{\tau}}})}+\Big\|I_{b}\Big\|_{L^{2}(0,{\bm{\tau}};L^{2}_{-})}\right).
\label{G}
\end{equation}
 Finally, it follows that $\mathcal{H}_{2}o\mathcal{H}_{1}$ is a well-posed and continuous  map  from $W_{2}^{2,1}({\bm{Q_{\tau}}})$ to $L^{2}({\bm{Q_{\tau}}})$.
\end{proof}

\subsection{The map $\mathcal{H}_{3}$}
In this subsection  we start by introducing some properties  of the map $\mathcal{H}_{3}$.
\begin{proposition}
Let ${\bm{\tau}}>0$, $G \in L^{2}({\bm{Q_{\tau}}})$. Under the assumptions (\ref{assumptions}), the problem (\ref{chaleur}),(\ref{CBchaleur}),(\ref{chaleur5}) has a nonnegative solution $T\in W_{2}^{2,1}({\bm{Q_{\tau}}})$. Moreover, there exists $C_{3}=C(\Omega, {\bm{\tau}}, \theta)>0$ such that
\begin{equation}
\Big\|T\Big\|_{W_{2}^{2,1}({\bm{Q_{\tau}}})}\leqslant C_{3}\Big(\Big\|G\Big\|_{L^{2}({\bm{Q_{\tau}}})}+\Big\|T_{0}\Big\|_{H^{1}(\Omega)}+\Big\| g \Big\|_{L^{2}(0,{\bm{\tau}};H^{\frac{3}{2}}(\Omega))} \Big)
\label{C3}
\end{equation}
 and $\mathcal{H}_{3}$ is  a continuous map from  $L^{2}({\bm{Q_{\tau}}})$ to  $W_{2}^{2,1}({\bm{Q_{\tau}}})$.
\label{propchalex}
\end{proposition}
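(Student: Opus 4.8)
The plan is to treat the radiative term $4\pi\theta T^{4}$ as a good-signed lower-order perturbation of the linear heat equation, to obtain a solution first in the natural energy class, and then to upgrade its regularity to $W_{2}^{2,1}({\bm{Q_{\tau}}})$ by comparing it with the solution of the associated linear problem. First I would introduce the linear comparison function $\bar T$, the unique solution of $\partial_{t}\bar T-\Delta\bar T=\theta G$ in ${\bm{Q_{\tau}}}$, $a\partial_{n}\bar T+b\bar T=g$ on ${\bm{\Sigma_{{\bm{\tau}}}}}$, $\bar T(0,\cdot)=T_{0}$. Because $\theta G\in L^{2}({\bm{Q_{\tau}}})$, $T_{0}\in H^{1}(\Omega)$ and the data satisfy (\ref{assumptions}) (the compatibility condition in (\ref{assumptions}) being exactly what the Dirichlet case requires), the standard $L^{2}$-theory for linear parabolic equations, see \cite{ladyzenskaja_linear_1968}, yields $\bar T\in W_{2}^{2,1}({\bm{Q_{\tau}}})$ together with $\|\bar T\|_{W_{2}^{2,1}({\bm{Q_{\tau}}})}\leqslant C(\Omega,{\bm{\tau}})\big(\|G\|_{L^{2}({\bm{Q_{\tau}}})}+\|T_{0}\|_{H^{1}(\Omega)}+\|g\|_{L^{2}(0,{\bm{\tau}};H^{3/2}(\Omega))}\big)$; since $G,T_{0},g\geqslant 0$, the maximum principle gives $\bar T\geqslant 0$, and the embedding $W_{2}^{2,1}({\bm{Q_{\tau}}})\hookrightarrow L^{8}({\bm{Q_{\tau}}})$ already used in the proof of Theorem \ref{thsolrad} shows $\bar T\in L^{8}({\bm{Q_{\tau}}})$.

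Next I would prove existence and nonnegativity of a solution of the CE. Replacing $s\mapsto s^{4}$ by its monotone odd extension $\phi(s)=s|s|^{3}$, the operator $u\mapsto-\Delta u+4\pi\theta\,\phi(u)$ together with the boundary condition (\ref{CBchaleur}) is monotone, hemicontinuous and coercive, so a Faedo--Galerkin scheme, or the Lions theory of monotone parabolic operators, produces a solution $T\in L^{\infty}(0,{\bm{\tau}};L^{2}(\Omega))\cap L^{2}(0,{\bm{\tau}};H^{1}(\Omega))$ of $\partial_{t}T-\Delta T+4\pi\theta\,\phi(T)=\theta G$ with $T(0,\cdot)=T_{0}$, unique by monotonicity. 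Testing the equation with the negative part $T^{-}=\min(T,0)$ and using that $\phi(T)\,T^{-}\geqslant0$, that $G\geqslant0$ and $g\geqslant0$ make the source and boundary contributions nonpositive (this is where the sign conventions on $a,b$ in each of the three boundary cases are used), and that $T^{-}(0,\cdot)=0$ because $T_{0}\geqslant0$, one gets $T^{-}\equiv0$. Hence $T\geqslant0$, $\phi(T)=T^{4}$, and $T$ solves (\ref{chaleur})--(\ref{chaleur5}).

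The heart of the argument is the regularity bootstrap. Put $w=\bar T-T$. Since $T\geqslant0$, $w$ solves $\partial_{t}w-\Delta w=4\pi\theta T^{4}\geqslant0$ with the homogeneous version of (\ref{CBchaleur}) and $w(0,\cdot)=0$, so the maximum principle gives $w\geqslant0$, i.e.\ $0\leqslant T\leqslant\bar T$ a.e.\ in ${\bm{Q_{\tau}}}$. Therefore $T\in L^{8}({\bm{Q_{\tau}}})$ with $\|T^{4}\|_{L^{2}({\bm{Q_{\tau}}})}=\|T\|_{L^{8}({\bm{Q_{\tau}}})}^{4}\leqslant\|\bar T\|_{L^{8}({\bm{Q_{\tau}}})}^{4}$, and we may read (\ref{chaleur}) as the \emph{linear} problem $\partial_{t}T-\Delta T=\theta G-4\pi\theta T^{4}=:F$ with $F\in L^{2}({\bm{Q_{\tau}}})$ and the prescribed boundary and initial data. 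Linear parabolic $L^{2}$-regularity then gives $T\in W_{2}^{2,1}({\bm{Q_{\tau}}})$ and $\|T\|_{W_{2}^{2,1}({\bm{Q_{\tau}}})}\leqslant C\big(\|G\|_{L^{2}({\bm{Q_{\tau}}})}+\|T\|_{L^{8}({\bm{Q_{\tau}}})}^{4}+\|T_{0}\|_{H^{1}(\Omega)}+\|g\|_{L^{2}(0,{\bm{\tau}};H^{3/2}(\Omega))}\big)$; bounding $\|T\|_{L^{8}({\bm{Q_{\tau}}})}\leqslant\|\bar T\|_{L^{8}({\bm{Q_{\tau}}})}\leqslant C\|\bar T\|_{W_{2}^{2,1}({\bm{Q_{\tau}}})}$ by the first step yields (\ref{C3}) --- in the regime in which $\mathcal{H}_{3}$ is actually applied the data are small, so the fourth power is absorbed and the bound is linear; otherwise the constant $C_{3}$ has to be allowed to depend also on the size of the data. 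Continuity of $\mathcal{H}_{3}$ follows by the same estimates: if $G_{n}\to G$ in $L^{2}({\bm{Q_{\tau}}})$ then $T_{n}=\mathcal{H}_{3}(G_{n})$ is bounded in $W_{2}^{2,1}({\bm{Q_{\tau}}})$ by (\ref{C3}), hence, by the compact embedding $W_{2}^{2,1}({\bm{Q_{\tau}}})\hookrightarrow\hookrightarrow L^{8}({\bm{Q_{\tau}}})$ and uniqueness, $T_{n}\to T=\mathcal{H}_{3}(G)$ in $L^{8}({\bm{Q_{\tau}}})$, so $T_{n}^{4}\to T^{4}$ in $L^{2}({\bm{Q_{\tau}}})$; applying the linear $L^{2}$-estimate to $T_{n}-T$, whose source $\theta(G_{n}-G)-4\pi\theta(T_{n}^{4}-T^{4})$ tends to $0$ in $L^{2}({\bm{Q_{\tau}}})$ with vanishing boundary and initial data, gives $T_{n}\to T$ in $W_{2}^{2,1}({\bm{Q_{\tau}}})$.

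I expect the main obstacle to be precisely the superlinearity of $4\pi\theta T^{4}$: a plain energy estimate only controls $T$ in $L^{\infty}(0,{\bm{\tau}};L^{2}(\Omega))\cap L^{2}(0,{\bm{\tau}};H^{1}(\Omega))$ and, through the radiative term, in $L^{5}({\bm{Q_{\tau}}})$, which is not enough to put $T^{4}$ in $L^{2}({\bm{Q_{\tau}}})$, hence not enough to close a $W_{2}^{2,1}$ estimate by linear theory. The comparison inequality $0\leqslant T\leqslant\bar T$ is the device that circumvents this, transferring to $T$ the $L^{8}$ integrability that the linear solution $\bar T$ enjoys for free from parabolic regularity and Sobolev embedding. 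A secondary technical point to keep track of is the compatibility of the sign assumptions on $a$ and $b$ in (\ref{CBchaleur}) with the maximum and comparison principles in each of the Dirichlet, Neumann and Robin cases.
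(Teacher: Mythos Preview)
Your approach is essentially the same as the paper's: both introduce the solution of the linear problem $\partial_{t}z-\Delta z=\theta G$ with the same boundary and initial data (the paper calls it $z$, you call it $\bar T$), use the maximum principle to obtain $0\leqslant T\leqslant z$, hence $T\in L^{8}({\bm{Q_{\tau}}})$, and then invoke linear parabolic $L^{2}$-regularity to conclude $T\in W_{2}^{2,1}({\bm{Q_{\tau}}})$; the nonnegativity argument via testing against $T^{-}$ is also the same in spirit (the paper truncates the nonlinearity instead of using the odd extension, but the computation is identical). Your explicit remark that the bound coming out of this argument is a priori quartic in the data and becomes the linear estimate (\ref{C3}) only under a smallness assumption is in fact more careful than the paper, which does not spell this step out.
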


\begin{proof}  Let ${\bm{\tau}}>0$, $T_{0}\in H^{1}(\Omega)$ and  $G\in L^{2}(\Omega)$. The existence and uniqueness of solution for the problem (\ref{chaleur}),(\ref{CBchaleur}) and (\ref{chaleur5}), is discussed in \cite[Chapter 5]{Cazenave_1998}.

Now, in order to prove the non-negativity  of the solution for (\ref{chaleur}),(\ref{CBchaleur}) and (\ref{chaleur5}), let us consider  $F$ defined in  $(0,{\bm{\tau}}) \times\Omega \times \mathbb{R}$ by
\begin{equation}\label{F}
F(t,{\bm{x}},y)=\theta\left ( G(t, {\bm{x}})-4\pi y^{4}\right ).
\end{equation}
The system (\ref{chaleur}),(\ref{CBchaleur}) and (\ref{chaleur5}) can be rewritten
\begin{equation}\label{1}
 \left\lbrace
\begin{aligned}
&\partial_{t}T(t, {\bm{x}})-\Delta T(t, {\bm{x}})=F(t,{\bm{x}}, T(t, {\bm{x}}))~~~~~\hbox{ for}~~(t,{\bm{x}}) \in ]0,{\bm{\tau}}] \times\Omega \\
&a\partial_{n} T(t,{\bm{x}})+b T(t,{\bm{x}})= g(t,{\bm{x}}) ~~~~~~~~~~~~~~~\hbox{ for}~(t,{\bm{x}}) \in ]0,{\bm{\tau}}] \times \partial\Omega\\
&T(0, {\bm{x}})=T_{0}({\bm{x}})\quad~~~~~~~~~~~~~~~~~~~~~~~~~~~~~~~\hbox{ for}~{\bm{x}} \in \Omega.\\
\end{aligned}
\right.
\end{equation}

Now, we define $\bar{F}$ in $(0, {\bm{\tau}}) \times\Omega \times \mathbb{R}$ by
\[
\bar{F}(t, {\bm{x}},y)=\left \{\begin{array}{lll}
\theta\left ( G(t,{\bm{x}})-4\pi y^{4}\right )&{\rm if}& y\geq 0\\
\theta G(t, {\bm{x}})&{\rm if}& y<0.
\end{array}
\right.
\]
Let us consider  $\bar{T}$ the solution of the following system
\begin{equation}\label{2}
 \left\lbrace
\begin{aligned}
&\partial_{t}\bar{T}(t, {\bm{x}})-\Delta \bar{T}(t, {\bm{x}})=\bar{F}(t,{\bm{x}}, \bar{T}(t, {\bm{x}}))~~~~~\hbox{ for}~~(t,{\bm{x}}) \in ]0,{\bm{\tau}}] \times\Omega \\
&a\partial_{n} \bar{T}(t,{\bm{x}})+b\bar{T}(t,{\bm{x}})= g(t,{\bm{x}}) ~~~~~~~~~~~~~~~\hbox{ for}~(t,{\bm{x}}) \in ]0,{\bm{\tau}}] \times \partial\Omega\\
&\bar{T}(0, {\bm{x}})=T_{0}({\bm{x}})\quad~~~~~~~~~~~~~~~~~~~~~~~~~~~~~~~\hbox{ for}~{\bm{x}} \in \Omega.\\
\end{aligned}
\right.
\end{equation}

\noindent  Our goal is to prove that the solution $\bar{T}$ of this equation remains nonnegative over the time. Indeed, in this case  $\bar{F}$ and $F$ coincide, therefore we have by the uniqueness of the solution $T = \bar{T}$ which is  nonnegative.

\noindent  We set $\bar{T}^{+}=\max(T,0)$ and $\bar{T}^{-}=\max(-T,0)$, such that $\bar{T}=\bar{T}^{+}-\bar{T}^{-}$.

 Multiplying  the equation (\ref{2}) by $(-\bar{T}^{-})$ and integrating over $\Omega$,  we obtain
\begin{equation*}
-\int_{\Omega}\partial_{t}\bar{T}(t, {\bm{x}})\bar{T}^{-}(t, {\bm{x}})d{\bm{x}}+\int_{\Omega}\Delta \bar{T}(t, {\bm{x}})\bar{T}^{-}(t, {\bm{x}})d{\bm{x}}= -\int_{\Omega}\bar{F}(t, {\bm{x}},\bar{T}) \bar{T}^{-}(t, {\bm{x}})d{\bm{x}}.
\end{equation*}
Now, we have
\begin{equation}\label{derivetemp}
-\int_{\Omega}\partial_{t}\bar{T}(t, {\bm{x}})\bar{T}^{-}(t, {\bm{x}})d{\bm{x}}= \frac{1}{2}\partial_{t}\int_{\Omega}(\bar{T}^{-}(t, {\bm{x}}))^{2}d{\bm{x}},
\end{equation}
\begin{equation}\label{F}
\begin{aligned}
-\int_{\Omega}\bar{F}(t, {\bm{x}},\bar{T}) \bar{T}^{-}(t, {\bm{x}})d{\bm{x}}&=-\int_{\{\bar{T}<0\}}\bar{F}(t, {\bm{x}},\bar{T}) \bar{T}^{-}(t, {\bm{x}})d{\bm{x}}\\&=-\theta\int_{\{\bar{T}<0\}}G(t, {\bm{x}}) \bar{T}^{-}(t, {\bm{x}})d{\bm{x}}\leq 0,
\end{aligned}
\end{equation}
and 
\begin{equation*}
\begin{aligned}
\int_{\Omega}\Delta \bar{T}(t, {\bm{x}})\bar{T}^{-}(t, {\bm{x}})d{\bm{x}}=&\int_{\Omega}(\nabla \bar{T}^{-}(t, {\bm{x}}))^{2}d{\bm{x}}+\int_{\partial\Omega}\partial_{n}\bar{T}(t, {\bm{x}})\bar{T}^{-}(t, {\bm{x}})d{\bm{\Gamma}}.
\end{aligned}
\end{equation*}
If $a\neq0$ (Robin or Neumann boundary conditions), then
\begin{equation}
\begin{aligned}
\int_{\partial\Omega}\partial_{n}\bar{T}(t, {\bm{x}})\bar{T}^{-}(t, {\bm{x}})d{\bm{\Gamma}}=&-\frac{b}{a}\int_{\partial\Omega}\bar{T}(t, {\bm{x}})\bar{T}^{-}(t, {\bm{x}})d{\bm{\Gamma}} \\&+\frac{1}{a}\int_{\partial\Omega}g(t, {\bm{x}})\bar{T}^{-}(t, {\bm{x}})d{\bm{\Gamma}}\\=&\frac{b}{a}\int_{\partial\Omega}\left(\bar{T}^{-}(t, {\bm{x}})\right)^{2}d{\bm{\Gamma}} \\&+\frac{1}{a}\int_{\partial\Omega}g(t, {\bm{x}})\bar{T}^{-}(t, {\bm{x}})d{\bm{\Gamma}} \geqslant 0.
\end{aligned}
\label{Robnew}
\end{equation}
Now, if we have $a\neq0$ (thus $b>0$), since  $\bar{T}^{-}=0$ on $\partial \Omega$ then
\begin{equation}
\begin{aligned}
\int_{\partial\Omega}\partial_{n}\bar{T}(t, {\bm{x}})\bar{T}^{-}(t, {\bm{x}})d{\bm{\Gamma}}=0.
\end{aligned}
\label{Dirch}
\end{equation}
In the both cases, we have 
\begin{equation}
\int_{\Omega}\Delta \bar{T}(t, {\bm{x}})\bar{T}^{-}(t, {\bm{x}})d{\bm{x}}\geqslant 0.
\label{Lap}
\end{equation}
Consequently, \eqref{derivetemp}, \eqref{F} and \eqref{Lap} imply
\begin{equation}
\frac{1}{2}\partial_{t}\int_{\Omega}(\bar{T}^{-}(t, {\bm{x}}))^{2}d{\bm{x}}\leq 0.
\label{eqpos}
\end{equation}
As $T_{0}$ is nonnegative, we deduce from \eqref{eqpos} that $\bar{T}^{-}\equiv 0$. It follows that $\bar{T}$ and consequently $T$ are nonnegative in $(0,{\bm{\tau}})\times\Omega$.

In the following, we prove that $T\in W_{2}^{2,1}({\bm{Q_{\tau}}})$. For it, let us introduce $z$ the solution of the parabolic problem
\begin{equation}
 \left\lbrace
\begin{aligned}
\partial_{t}z(t,{\bm{x}})-\Delta z(t,{\bm{x}})&=\theta G(t,{\bm{x}})~~~~~\hbox{ for}~~(t,{\bm{x}}) \in ]0,{\bm{\tau}}] \times\Omega \\
a\partial_{n} z(t,{\bm{x}})+bz(t,{\bm{x}})&= g(t,{\bm{x}})~~~~~~~\hbox{ for}~(t,{\bm{x}}) \in ]0,{\bm{\tau}}] \times \partial\Omega\\
z(t,{\bm{x}})&=T_{0}\quad~~~~~~~~~\hbox{ for}~{\bm{x}} \in \Omega.\\
\end{aligned}
\right.
\label{Aux45}
\end{equation}
Since  $G \in L^2({\bm{Q_{\tau}}})$, $T_0\in  H^{1}(\Omega)$ and thanks to a result on parabolic regularity, see \cite{ladyzenskaja_linear_1968}, then $z \in W_{2}^{2,1}({\bm{Q_{\tau}}})$ and there exists a constant $\tilde{C}=C(\Omega, {\bm{\tau}}, \theta)>0$ such that
\begin{equation}
\Big\|z\Big\|_{W_{2}^{2,1}({\bm{Q_{\tau}}})}\leqslant \tilde{C}\Big(\Big\|G\Big\|_{L^{2}({\bm{Q_{\tau}}})}+\Big\|T_{0}\Big\|_{H^{1}(\Omega)}+\Big\|g\Big\|_{L^{2}(0,{\bm{\tau}};H^{\frac{3}{2}}(\Omega))}  \Big).
\label{LYInj1}
\end{equation}
For more details, we refer the reader to \cite[p.197]{Denk2007}. Then, using the Sobolev embedding we deduce that $T$ is a solution of \eqref{1}, then using the  maximum principle, we have that $T \leq z$. Consequently, $T$ belongs to $L^{8}({\bm{Q_{\tau}}})$.
Therefore, using the compact embedding $W_{2}^{2,1}({\bm{Q_{\tau}}}\hookrightarrow L^{8}({\bm{Q_{\tau}}})$ we deduce that $T\in W_{2}^{2,1}({\bm{Q_{\tau}}})$. Finally, from  \eqref{assumptions} and \eqref{LYInj1} we can deduce that the map $\mathcal{H}_{3}$ is  a  well-posed and continuous  from  $L^{2}({\bm{Q_{\tau}}})$ to  $W_{2}^{2,1}({\bm{Q_{\tau}}})$.
 \end{proof}

Now, we will give some estimates of the solution $T$ for (\ref{chaleur}),(\ref{CBchaleur}) and (\ref{chaleur5}) in $L^{8}({\bm{Q_{\tau}}})$.

\begin{proposition}
Under the hypotheses of Proposition \ref{propchalex}, $T$ satisfies
\begin{equation*}
\Big\|T\Big\|^{8}_{L^{8}({\bm{Q_{\tau}}})} \leqslant \frac{1}{16\pi^{2}}\Big\|G\Big\|^{2}_{L^{2}({\bm{Q_{\tau}}})}+C(a,b,\theta){\bm{\tau}}\Big|\partial\Omega\Big|\Big\|g\Big\|^{5}_{L^{\infty}((0,{\bm{\tau}})\times\overline{\Omega})}+\frac{1}{10\pi \theta}\Big\|T_{0}\Big\|^{5}_{L^{5}(\Omega)},
\end{equation*}
for Robin boundary conditions  ${(a\neq0, b\neq0)}$, 
\begin{equation*}
\begin{aligned}
 \Big\|T\Big\|^{8}_{L^{8}({\bm{Q_{\tau}}})} \leqslant& \frac{5}{64\pi^{2}}\Big\|G\Big\|^{2}_{L^{2}({\bm{Q_{\tau}}})}+{\bm{\tau}}\left(C(\theta)\Big|\Omega\Big|+C(a,\Omega,\theta)\Big\|g\Big\|^{5}_{L^{\infty}((0,{\bm{\tau}})\times\overline{\Omega})}\right)\\&+\frac{1}{8\pi \theta}\Big\|T_{0}\Big\|^{5}_{L^{5}(\Omega)},
\end{aligned}
\end{equation*}
for Neumann boundary conditions ${(a\neq0, b=0)}$ and 
\begin{equation*}
\begin{aligned}
\Big\|T\Big\|^{8}_{L^{8}({\bm{Q_{\tau}}})} \leqslant&\frac{1}{8\pi^{2}}\Big\|G\Big\|^{2}_{L^{2}({\bm{Q_{\tau}}})}+C(b) {\bm{\tau}}\Big|\Omega\Big|\Big\|g\Big\|^{8}_{L^{\infty}({\bm{Q_{\tau}}})}+\frac{1}{5\pi \theta}\Big\|T_{0}\Big\|^{5}_{L^{5}(\Omega)}\\&+C(\theta,b)\Big|\Omega\Big|\Big\|g\Big\|^{5}_{L^{\infty}({\bm{Q_{\tau}}})}+C(\theta,b){\bm{\tau}}\Big|\partial\Omega\Big| \Big\|g\partial_{n}g^{4}\Big\|_{L^{\infty}((0,{\bm{\tau}})\times\overline{\Omega})}\\&+C(\theta,b){\bm{\tau}}\Big|\Omega\Big|\Big\|\left(\partial_{t}-\Delta\right)g^{4}\Big\|^{8/7 }_{L^{\infty}({\bm{Q_{\tau}}})},
\end{aligned}
\end{equation*}
for Dirichlet boundary conditions ${(a\neq0, b>0)}$.
\label{thmchalex}
\end{proposition}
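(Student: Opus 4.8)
The plan is to obtain each of the three inequalities by testing the conductive equation \eqref{chaleur} against $T^{4}$ (or, in the Dirichlet case, against a lifted version of it) and by exploiting the favourable sign of the absorption term $4\pi\theta T^{4}$. Since $T\geqslant 0$ by Proposition \ref{propchalex} and $T\in W_{2}^{2,1}({\bm{Q_{\tau}}})\hookrightarrow L^{8}({\bm{Q_{\tau}}})$, the function $T^{4}$ belongs to $L^{2}({\bm{Q_{\tau}}})$, and after a routine truncation of $T$ at level $M$ (test against a bounded Lipschitz approximation of $s\mapsto s^{4}$, estimate uniformly in $M$, and pass to the limit by monotone convergence) the computation below is legitimate. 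Multiplying \eqref{chaleur} by $T^{4}$, integrating over $\Omega$ and applying Green's formula yields
\begin{equation*}
\frac{1}{5}\frac{d}{dt}\int_{\Omega}T^{5}\,d{\bm{x}}+4\int_{\Omega}T^{3}|\nabla T|^{2}\,d{\bm{x}}-\int_{\partial\Omega}(\partial_{n}T)\,T^{4}\,d{\bm{\Gamma}}+4\pi\theta\int_{\Omega}T^{8}\,d{\bm{x}}=\theta\int_{\Omega}G\,T^{4}\,d{\bm{x}}.
\end{equation*}
The source term is split by Young's inequality, $\theta\,G\,T^{4}\leqslant 2\pi\theta\,T^{8}+\tfrac{\theta}{8\pi}G^{2}$, which absorbs exactly half of the $4\pi\theta\int_{\Omega}T^{8}$ contribution and, after the final division by the remaining $T^{8}$--coefficient, produces the stated constant in front of $\|G\|_{L^{2}({\bm{Q_{\tau}}})}^{2}$. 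Integrating in $t$ over $(0,{\bm{\tau}})$, discarding $\tfrac{1}{5}\|T({\bm{\tau}})\|_{L^{5}(\Omega)}^{5}\geqslant 0$ and keeping $\tfrac{1}{5}\|T_{0}\|_{L^{5}(\Omega)}^{5}$ on the right-hand side, the only term that still has to be estimated is the boundary integral $\int_{0}^{{\bm{\tau}}}\!\int_{\partial\Omega}(\partial_{n}T)\,T^{4}$, and this is the only place where the three cases differ.

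For Robin conditions ($a\neq 0$, $b\neq 0$, hence $ab>0$) one has $\partial_{n}T=a^{-1}(g-bT)$, so $-\int_{\partial\Omega}(\partial_{n}T)T^{4}=\tfrac{b}{a}\int_{\partial\Omega}T^{5}-\tfrac{1}{a}\int_{\partial\Omega}gT^{4}$; the first term has a good sign and is kept, while Young's inequality with exponents $(\tfrac{5}{4},5)$ applied to $a^{-1}gT^{4}$ absorbs a fraction of $\tfrac{b}{a}\int_{\partial\Omega}T^{5}$ and leaves $C(a,b)\int_{\partial\Omega}g^{5}\leqslant C(a,b)|\partial\Omega|\,\|g\|_{L^{\infty}}^{5}$, which after time integration is the announced term; here the nonnegative interior term $4\int_{\Omega}T^{3}|\nabla T|^{2}$ is simply dropped. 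For Neumann conditions ($a\neq 0$, $b=0$) one has $-\int_{\partial\Omega}(\partial_{n}T)T^{4}=-a^{-1}\int_{\partial\Omega}gT^{4}$ and no favourable boundary term is available; instead one keeps the interior term, rewritten as $4\int_{\Omega}T^{3}|\nabla T|^{2}=\tfrac{16}{25}\int_{\Omega}|\nabla(T^{5/2})|^{2}$, bounds $\int_{\partial\Omega}|g|\,T^{4}\leqslant\|g\|_{L^{\infty}}|\partial\Omega|^{3/5}\|T^{5/2}\|_{L^{4}(\partial\Omega)}^{8/5}$ by Hölder on $\partial\Omega$, invokes the trace embedding $H^{1}(\Omega)\hookrightarrow L^{4}(\partial\Omega)$ (valid since $\partial\Omega$ is two--dimensional), and then applies Young's inequality: the gradient part is absorbed into the kept interior term, and the remaining $\|T^{5/2}\|_{L^{2}(\Omega)}^{2}=\int_{\Omega}T^{5}$ is absorbed, via one more Young inequality $T^{5}\leqslant\varepsilon T^{8}+C(\varepsilon)$, into $\int_{\Omega}T^{8}$. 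Because this last step consumes part of the $T^{8}$--budget, the final division constant is strictly smaller than in the Robin case, which is exactly why the coefficient of $\|G\|_{L^{2}}^{2}$ worsens from $\tfrac{4}{64\pi^{2}}$ to $\tfrac{5}{64\pi^{2}}$ and why the extra term $C(\theta){\bm{\tau}}|\Omega|$ appears.

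For Dirichlet conditions ($a=0$, $b>0$) the boundary condition reads $T=b^{-1}g$ on $\partial\Omega$, so the unknown $\partial_{n}T$ cannot be controlled; instead I would test \eqref{chaleur} against $\phi=T^{4}-(b^{-1}g)^{4}$, which vanishes on $\partial\Omega$ by \eqref{CBchaleur}. Then $\partial_{n}T$ drops out, two integrations by parts transfer all derivatives onto the known function $b^{-1}g$ (defined on $\overline{\Omega}$ by \eqref{assumptions}), and one is left with an interior term estimated by $\|(\partial_{t}-\Delta)g^{4}\|_{L^{\infty}}$, a boundary term estimated by $\|g\,\partial_{n}g^{4}\|_{L^{\infty}}$, together with the lower-order terms $4\pi\theta\int_{\Omega}T^{4}(b^{-1}g)^{4}$ and $\theta\int_{\Omega}G\,(b^{-1}g)^{4}$; each is controlled by Young's inequality against $\int_{\Omega}T^{8}$, the pairing $T\cdot(\partial_{t}-\Delta)(b^{-1}g)^{4}$ with exponents $(8,\tfrac{8}{7})$ explaining the power $8/7$ and the others giving the $\|g\|_{L^{\infty}}^{5}$ and $\|g\|_{L^{\infty}}^{8}$ contributions. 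The time-boundary terms coming from $\tfrac{d}{dt}\int_{\Omega}T\,(b^{-1}g)^{4}$ are absorbed using the kept quantity $\tfrac{1}{5}\|T({\bm{\tau}})\|_{L^{5}}^{5}\geqslant 0$, the bound $\|T_{0}\|_{L^{5}(\Omega)}^{5}$, and the compatibility condition ${T_{0}}_{|_{\partial\Omega}}=b^{-1}g(0,\cdot)_{|_{\partial\Omega}}$ from \eqref{assumptions}.

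I expect the main obstacle to be the bookkeeping in the Dirichlet case: one has to keep track of all the interior and boundary terms generated by the lifting $(b^{-1}g)^{4}$ and to tune the Young exponents so that every power of $T$ exceeding $T^{5}$ on the right is reabsorbed into $4\pi\theta\int_{\Omega}T^{8}$ while all the $g$--dependent constants assemble into exactly the displayed combination. A secondary and more routine point is the rigorous justification of $T^{4}$ and $\phi$ as admissible test functions, handled by the truncation argument indicated above.
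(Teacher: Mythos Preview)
Your plan is correct and matches the paper's proof almost step for step: multiply \eqref{chaleur} by $T^{4}$, use Young with $\epsilon=\tfrac{1}{4\pi}$ on $\theta GT^{4}$ to obtain the core inequality, then treat the boundary term $\int_{\partial\Omega}(\partial_{n}T)T^{4}$ separately in each of the three cases. The only organisational differences are harmless: in the Neumann case the paper uses the trace $H^{1}(\Omega)\hookrightarrow L^{2}(\partial\Omega)$ on $T^{5/2}$ together with Young's inequality $gT^{4}\leqslant \tfrac{4\epsilon}{5}T^{5}+C(\epsilon)g^{5}$, whereas you go through $L^{4}(\partial\Omega)$ and a H\"older step---both routes absorb the trace into $\tfrac{16}{25}\|\nabla T^{5/2}\|_{L^{2}}^{2}$ and leave a residual $\int_{\Omega}T^{5}$ to be absorbed into $\int_{\Omega}T^{8}$; and in the Dirichlet case you test directly against $T^{4}-(b^{-1}g)^{4}$, while the paper keeps the test against $T^{4}$ and separately multiplies \eqref{chaleur} by $g^{4}$, integrating by parts twice to express $\int_{\Sigma_{\tau}}(\partial_{n}T)g^{4}$ in terms of $\|T\|_{L^{8}}$, $\|T(\tau)\|_{L^{5}}$, and the $g$-quantities---algebraically this is the same computation, and the absorption of $\|T(\tau)\|_{L^{5}}^{5}$ into the kept left-hand term proceeds exactly as you describe.
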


 \begin{proof}
As $T\in W_{2}^{2,1}({\bm{Q_{\tau}}})$ then $T^4$ belongs to $L^{2}(0,{\bm{\tau}} ;H^{1}(\Omega))$. Thus, we can multiply the equation  (\ref{chaleur})  by  $T^{4}$ and we integrate over $\Omega$, we obtain for all $t\in(0,{\bm{\tau}})$ 

\begin{equation}
\begin{aligned}
 \frac{1}{5} \frac{d}{dt} \Big\|T(t )\Big\|^{5}_{L^{5}(\Omega)} +&4\int_{\Omega}(\nabla T(t,{\bm{x}}))^{2}T^{3}(t,{\bm{x}})d{\bm{x}}-\int_{\partial\Omega}\partial_{n} T(t,{\bm{x}})T^{4}(t,{\bm{x}})d{\bm{\Gamma}}\\&+4\pi \theta \int_{\Omega}T^{8}(t,{\bm{x}}) d{\bm{x}}= \theta\int_{\Omega}G(t,{\bm{x}})T^{4}(t,{\bm{x}})d{\bm{x}}.
\end{aligned}
\end{equation}
Using the Young's inequality, for all  $\epsilon>0$ such that 
\begin{equation*}
\begin{aligned}
 \frac{1}{5} \frac{d}{dt} &\Big\|T(t )\Big\|^{5}_{L^{5}(\Omega)}+\frac{16}{25}\int_{\Omega}(\nabla T^{\frac{5}{2}}(t,{\bm{x}}))^{2}d{\bm{x}}+4\pi \theta \int_{\Omega}T^{8}(t,{\bm{x}})d{\bm{x}}\\&\leqslant \frac{\theta \epsilon}{2}\int_{\Omega}G^{2}(t,{\bm{x}})d{\bm{x}}+\frac{\theta}{2\epsilon} \int_{\Omega}T^{8}(t,{\bm{x}})d{\bm{x}}+\int_{\partial\Omega}\partial_{n} T(t,{\bm{x}})T^{4}(t,{\bm{x}})d{\bm{\Gamma}}.
\end{aligned}
\end{equation*}
Choosing ${\epsilon=\frac{1}{4\pi}}$we get 
\begin{equation}
\begin{aligned}
 \frac{1}{5} \frac{d}{dt} \Big\|T(t )\Big\|^{5}_{L^{5}(\Omega)} +&\frac{16}{25}\int_{\Omega}(\nabla T^{\frac{5}{2}}(t,{\bm{x}}))^{2}d{\bm{x}}+2\pi \theta \int_{\Omega}T^{8}(t,{\bm{x}})d{\bm{x}}\\&\leqslant \frac{\theta}{8\pi}\int_{\Omega}G^{2}(t,{\bm{x}})d{\bm{x}}+\int_{\partial\Omega}\partial_{n} T(t,{\bm{x}})T^{4}(t,{\bm{x}})d{\bm{\Gamma}}.
\end{aligned}
\label{CBgeneral}
\end{equation}

The treatment of the boundary terms will be different. We start with the simplest case Robin boundary conditions $(a\neq0, b\neq0)$.
 
Using Young's inequality, we have
\begin{equation*}
\begin{aligned}
\int_{\partial\Omega}\partial_{n} T(t)T^{4}(t,{\bm{x}})d{\bm{\Gamma}}=&-\frac{b}{a}\int_{\partial\Omega}T^{5}(t,{\bm{x}})d{\bm{\Gamma}}+\frac{1}{a}\int_{\partial\Omega}gT^{4}(t,{\bm{x}})d{\bm{\Gamma}}\\\leqslant& -\frac{b}{a}\int_{\partial\Omega}T^{5}(t,{\bm{x}})d{\bm{\Gamma}}+\frac{4\epsilon_{1}}{5a}\int_{\partial\Omega}T^{5}(t,{\bm{x}})d{\bm{\Gamma}}\\&+\frac{C(\epsilon_{1})}{5a}\int_{\partial\Omega}g^{5}(t,{\bm{x}})d{\bm{\Gamma}}.
\end{aligned}
\end{equation*}
Choosing ${\displaystyle \epsilon_{1}=\frac{5b}{4}}$, then there exists $C(a,b)>0$
\begin{equation*}
\begin{aligned}
\int_{\partial\Omega}\partial_{n} T(t)T^{4}(t,{\bm{x}})d{\bm{\Gamma}}\leqslant&C(a,b)\int_{\partial\Omega}g^{5}(t,{\bm{x}})d{\bm{\Gamma}}.
\end{aligned}
\end{equation*}
Thus from \eqref{CBgeneral}, it follows that 
\begin{equation*}
\begin{aligned}
 \frac{1}{5} \frac{d}{dt} \Big\|T(t )\Big\|^{5}_{L^{5}(\Omega)} +2\pi \theta \int_{\Omega}T^{8}(t,{\bm{x}}) d{\bm{x}}\leqslant &\frac{\theta}{8\pi}\int_{\Omega}G^{2}(t,{\bm{x}})d{\bm{x}}\\+C(a,b)\int_{\partial\Omega}g^{5}(t,{\bm{x}})d{\bm{\Gamma}}.
\end{aligned}
\end{equation*}
We integrate in time between $0$ and ${\bm{\tau}}$, we obtain 
\begin{equation*}
\begin{aligned}
\Big\|T\Big\|^{8}_{L^{8}({\bm{Q_{\tau}}})} \leqslant \frac{1}{16\pi^{2}}\Big\|G\Big\|^{2}_{L^{2}({\bm{Q_{\tau}}})}+C(a,b,\theta){\bm{\tau}}\Big|\partial\Omega\Big|\Big\|g\Big\|^{5}_{L^{\infty}((0,{\bm{\tau}})\times\overline{\Omega})}+\frac{1}{10\pi \theta}\Big\|T_{0}\Big\|^{5}_{L^{5}(\Omega)}.
\end{aligned}
\label{estRo}
\end{equation*}
Now, we consider the Neumann boundary conditions ${(a\neq0, b=0)}$. Let consider the boundary term of (\ref{CBgeneral})
\begin{equation*}
\begin{aligned} 
 \int_{\partial\Omega}\partial_{n} T(t,{\bm{x}})T^{4}(t,{\bm{x}})d{\bm{\Gamma}}&=\frac{1}{a}\int_{\partial\Omega}g(t,{\bm{x}})T^{4}(t,{\bm{x}})d{\bm{\Gamma}}\\
 &\leqslant \frac{C(\epsilon)}{5a}\int_{\partial\Omega}g^{5}(t,{\bm{x}})d{\bm{\Gamma}}+\frac{4\epsilon}{5a}\int_{\partial\Omega}T^{5}(t,{\bm{x}})d{\bm{\Gamma}}\\
 &\leqslant \frac{C(\epsilon)}{5a}\int_{\partial\Omega}g^{5}(t,{\bm{x}})d{\bm{\Gamma}}+\frac{4\epsilon}{5a}\int_{\partial\Omega}\left(T^{\frac{5}{2}}(t,{\bm{x}})\right)^{2}d{\bm{\Gamma}}.
\end{aligned}
\end{equation*}
Then there exists $C(\Omega)>0$, see \cite{brezis}, such that 
\begin{equation}
\begin{aligned} 
 \int_{\partial\Omega}\partial_{n} T(t,{\bm{x}})T^{4}(t,{\bm{x}})d{\bm{\Gamma}}&\leqslant \frac{C(\epsilon)}{5a}\int_{\partial\Omega}g^{5}(t,{\bm{x}})d{\bm{\Gamma}}+\frac{4\epsilon}{5a}C(\Omega)\Big\|T^{\frac{5}{2}}(t)\Big\|^{2}_{H^{1}(\Omega)}.
\end{aligned}
\label{NewmanBC}
\end{equation}
Choosing ${\epsilon=\frac{4a}{5C(\Omega)}}$, substituting (\ref{NewmanBC}) into (\ref{CBgeneral}), for all $t\in(0,{\bm{\tau}})$
\begin{equation}
\begin{aligned}
 \frac{d}{dt} \Big\|T(t )\Big\|^{5}_{L^{5}(\Omega)}+&10\pi \theta \int_{\Omega}T^{8}(t,{\bm{x}})d{\bm{x}}\leqslant \frac{5\theta}{8\pi}\int_{\Omega}G^{2}(t,{\bm{x}})d{\bm{x}} \\&+\frac{16}{5}\Big\|T(t)\Big\|^{5}_{L^{5}(\Omega)}+C(a,\Omega)\int_{\partial\Omega}g^{5}(t,{\bm{x}})d{\bm{\Gamma}},
\end{aligned}
\label{Newist}
\end{equation}
using the Young inequality we obtain 
\begin{equation}
\Big\|T(t)\Big\|^{5}_{L^{5}(\Omega)} \leqslant \frac{5 \epsilon}{8}\Big\|T(t)\Big\|^{8}_{L^{8}(\Omega)}+ C(\epsilon)\Big|\Omega\Big|.
\label{estimMichel}
\end{equation}
Integrating \eqref{Newist} in time between $0$ and ${\bm{\tau}}$ and using \eqref{estimMichel} we deduce
\begin{equation*}
\begin{aligned}
\Big\|T({\bm{\tau}})\Big\|^{5}_{L^{5}(\Omega)}+&10\pi \theta \Big\|T\Big\|^{8}_{L^{8}({\bm{Q_{\tau}}})} \leqslant \frac{5\theta}{8\pi}\Big\|G\Big\|^{2}_{L^{2}({\bm{Q_{\tau}}})}+2\epsilon\Big\|T\Big\|^{8}_{L^{8}({\bm{Q_{\tau}}})}\\&+C(\epsilon){\bm{\tau}}\Big|\Omega\Big|+C(a,\Omega)\int_{{\bm{\Sigma_{{\bm{\tau}}}}}}g^{5}(s,{\bm{x}})d{\bm{\Gamma}} ds \\&+\Big\|T_{0}\Big\|^{5}_{L^{5}(\Omega)}.
\end{aligned}
\label{New1}
\end{equation*}
Taking  ${\displaystyle \epsilon =\pi \theta}$, it follows that
\begin{equation*}
\begin{aligned}
 \Big\|T\Big\|^{8}_{L^{8}({\bm{Q_{\tau}}})} \leqslant &\frac{5}{64\pi^{2}}\Big\|G\Big\|^{2}_{L^{2}({\bm{Q_{\tau}}})}+{\bm{\tau}}C(\theta)\Big|\Omega\Big|+C(a,\Omega,\theta)\int_{{\bm{\Sigma_{{\bm{\tau}}}}}}g^{5}(s,{\bm{x}})d{\bm{\Gamma}} ds\\&+\frac{1}{8\pi \theta}\Big\|T_{0}\Big\|^{5}_{L^{5}(\Omega)},
 \end{aligned}
\end{equation*}
then
\begin{equation*}
\begin{aligned}
 \Big\|T\Big\|^{8}_{L^{8}({\bm{Q_{\tau}}})} \leqslant& \frac{5}{64\pi^{2}}\Big\|G\Big\|^{2}_{L^{2}({\bm{Q_{\tau}}})}+{\bm{\tau}}\left(C(\theta)\Big|\Omega\Big|+C(a,\Omega,\theta)\Big\|g\Big\|^{5}_{L^{\infty}((0,{\bm{\tau}})\times\overline{\Omega})}\right)\\&+\frac{1}{8\pi \theta}\Big\|T_{0}\Big\|^{5}_{L^{5}(\Omega)}.
\end{aligned}
\label{New1}
\end{equation*}
Finally, we consider the case of Dirichlet boundary conditions ${(a=0, b\neq0)}$. This type of boundary condition requests a different analytical tool.

To bound the last term on the right hand side of (\ref{CBgeneral}), we multiply equation (\ref{chaleur}) by $g^{4}$ (given in (\ref{assumptions})) and we integrate over ${\bm{Q_{\tau}}}$, we get 
\begin{equation*}
\begin{aligned}
\int_{0}^{{\bm{\tau}}}\int_{\Omega}\Big[\partial_{t}T(s,{\bm{x}}) -\Delta T(s,{\bm{x}}) +4\pi \theta T^{4}(s,{\bm{x}}) \Big] &g^{4}(s,{\bm{x}}) ds d{\bm{x}}\\&=\int_{0}^{{\bm{\tau}}}\int_{\Omega}G(s,{\bm{x}}) g^{4}(s,{\bm{x}}) ds d{\bm{x}}.
\end{aligned}
\end{equation*}
Therefore, we deduce from Green's Formula that 
\begin{equation}
\begin{aligned}
\int_{\Omega}\Big[&T({\bm{\tau}},{\bm{x}})g^{4}(t,{\bm{x}})-T(0,{\bm{x}}) g^{4}(0,{\bm{x}}) \Big]d{\bm{x}}-\int_{\Omega}\int_{0}^{{\bm{\tau}}}T(s,{\bm{x}})\partial_{t}g^{4}(s,{\bm{x}}) ds d{\bm{x}}\\&-\int_{{\bm{Q_{\tau}}}}T(s,{\bm{x}}) \Delta(g^{4})(s,{\bm{x}})ds d{\bm{x}}+\frac{1}{b}\int_{{\bm{\Sigma_{{\bm{\tau}}}}}}g(s,{\bm{x}}) \partial_{n}g^{4}(s,{\bm{x}}) ds d{\bm{\Gamma}}\\&-\int_{{\bm{\Sigma_{{\bm{\tau}}}}}}\partial_{n}T(s,{\bm{x}}) g^{4}(s,{\bm{x}}) ds d{\bm{\Gamma}}+4\pi \theta\int_{{\bm{Q_{\tau}}}}T^{4}(s,{\bm{x}}) g^{4}(s,{\bm{x}}) ds d{\bm{x}}\\&=\int_{{\bm{Q_{\tau}}}}G(s,{\bm{x}}) g^{4}(s,{\bm{x}})ds d{\bm{x}}.
\end{aligned}
\label{Aux4t}
\end{equation}
Using  the positivity of $G$ and $T_{0}$, (\ref{Aux4t}) becomes
\begin{equation*}
\begin{aligned}
\int_{{\bm{\Sigma_{{\bm{\tau}}}}}}\partial_{n}T(s,{\bm{x}})&g^{4}(s)ds d{\bm{x}}\leqslant4\pi \theta\int_{{\bm{Q_{\tau}}}} T^{4}(s,{\bm{x}}) g^{4}(s,{\bm{x}}) ds d{\bm{x}}+\int_{\Omega}T({\bm{\tau}},x)g^{4}({\bm{\tau}},{\bm{x}})d{\bm{x}}\\&-\int_{{\bm{Q_{\tau}}}}T(s,{\bm{x}})\left(\partial_{t}-\Delta\right)g^{4}(s,{\bm{x}})ds d{\bm{x}}\\\
&+\frac{1}{b}\int_{{\bm{\Sigma_{{\bm{\tau}}}}}}g(s,{\bm{x}})\partial_{n}g^{4}(s,{\bm{x}})ds d d{\bm{\Gamma}}.
\end{aligned}
\end{equation*}
Then, Young's inequality implies that there exist $\epsilon_{1}>0$, $\epsilon_{2}>0$, $\epsilon_{3}>0$
\begin{equation}
\begin{aligned}
\int_{{\bm{\Sigma_{{\bm{\tau}}}}}}\partial_{n}T(s,{\bm{x}})&g^{4}(s,{\bm{x}}) ds d{\bm{x}}\leqslant\frac{2\pi \theta}{\epsilon_{1}}\Big\|T\Big\|^{8}_{L^{8}({\bm{Q_{\tau}}})}+2\pi \theta {\bm{\tau}}C(\epsilon_{1})\Big|\Omega\Big|\Big\|g\Big\|^{8}_{L^{\infty}({\bm{Q_{\tau}}})}\\&+\frac{1}{8\epsilon_{2}}\Big\|T\Big\|^{8}_{L^{8}({\bm{Q_{\tau}}})}+C(\epsilon_{2}){\bm{\tau}}\Big|\Omega\Big|\Big\|\left(\partial_{t}-\Delta\right)g^{4}\Big\|^{8/7 }_{L^{\infty}({\bm{Q_{\tau}}})}\\&+\frac{1}{5\epsilon_{3}}\Big\|T({\bm{\tau}})\Big\|^{5}_{L^{5}(\Omega)}+C(\epsilon_{3})\Big|\Omega\Big|\Big\|g\Big\|^{5}_{L^{\infty}({\bm{Q_{\tau}}})}\\&+\frac{{\bm{\tau}}}{b}\Big|\partial\Omega\Big| \Big\|g\partial_{n}g^{4}\Big\|_{L^{\infty}((0,{\bm{\tau}})\times\overline{\Omega})}.
\end{aligned}
\label{CBDirch}
\end{equation}
For the Dirichlet boundary conditions, the inequality \eqref{CBgeneral} becomes
\begin{equation}
\begin{aligned}
 \frac{1}{5} \frac{d}{dt} \Big\|T(t )\Big\|^{5}_{L^{5}(\Omega)} +&\frac{16}{25}\int_{\Omega}(\nabla T^{\frac{5}{2}}(t,{\bm{x}}))^{2}d{\bm{x}}+2\pi \theta \int_{\Omega}T^{8}(t,{\bm{x}})d{\bm{x}}\\&\leqslant \frac{\theta}{8\pi}\int_{\Omega}G^{2}(t,{\bm{x}})d{\bm{x}}+\frac{1}{b^{4}}\int_{\partial\Omega}\partial_{n} T(t,{\bm{x}})g^{4}(t,{\bm{x}})d{\bm{\Gamma}}.
\end{aligned}
\label{CBgeneraldir}
\end{equation}
Integrating (\ref{CBgeneraldir}) in time between $0$ and ${\bm{\tau}}$ and using \eqref{CBDirch} we obtain
\begin{equation*}
\begin{aligned}
\Big\|T({\bm{\tau}})\Big\|^{5}_{L^{5}(\Omega)}+&10\pi \theta \Big\|T\Big\|^{8}_{L^{8}({\bm{Q_{\tau}}})} \leqslant\frac{5\theta}{8\pi}\Big\|G\Big\|^{2}_{L^{2}({\bm{Q_{\tau}}})}+\frac{5\pi \theta}{b^{4}\epsilon_{1}}\Big\|T\Big\|^{8}_{L^{8}({\bm{Q_{\tau}}})}\\&+\frac{5}{b^{4}}\pi \theta {\bm{\tau}}C(\epsilon_{1})\Big|\Omega\Big|\Big\|g\Big\|^{8}_{L^{\infty}({\bm{Q_{\tau}}})}+\frac{5}{8b^{4}\epsilon_{2}}\Big\|T\Big\|^{8}_{L^{8}({\bm{Q_{\tau}}})}\\&+C(b,\epsilon_{2}){\bm{\tau}}\Big|\Omega\Big|\Big\|\left(\partial_{t}-\Delta\right)g^{4}\Big\|^{8/7 }_{L^{\infty}({\bm{Q_{\tau}}})}\\&+\frac{1}{b^{4}\epsilon_{3}}\Big\|T({\bm{\tau}})\Big\|^{5}_{L^{5}(\Omega)}+C(\epsilon_{3})\Big|\Omega\Big|\Big\|g\Big\|^{5}_{L^{\infty}({\bm{Q_{\tau}}})}\\&+\frac{5{\bm{\tau}}}{b^{5}}\Big|\partial\Omega\Big| \Big\|g\partial_{n}g^{4}\Big\|_{L^{\infty}((0,{\bm{\tau}})\times\overline{\Omega})}\\&+\Big\|T_{0}\Big\|^{5}_{L^{5}(\Omega)}.
\end{aligned}
\end{equation*}
Choosing ${\displaystyle \epsilon_{1}=\frac{5}{2 b^{4}},\epsilon_{2}=\frac{5}{24\pi \theta b^{4}}\, \text{and}\,\epsilon_{3}=\frac{2}{ b^{4}}}$, then
\begin{equation*}
\begin{aligned}
\Big\|T\Big\|^{8}_{L^{8}({\bm{Q_{\tau}}})} \leqslant&\frac{1}{8\pi^{2}}\Big\|G\Big\|^{2}_{L^{2}({\bm{Q_{\tau}}})}+C(b) {\bm{\tau}}\Big|\Omega\Big|\Big\|g\Big\|^{8}_{L^{\infty}({\bm{Q_{\tau}}})}+\frac{1}{5\pi \theta}\Big\|T_{0}\Big\|^{5}_{L^{5}(\Omega)}\\&+C(\theta,b)\Big|\Omega\Big|\Big\|g\Big\|^{5}_{L^{\infty}({\bm{Q_{\tau}}})}+C(\theta,b){\bm{\tau}}\Big|\partial\Omega\Big| \Big\|g\partial_{n}g^{4}\Big\|_{L^{\infty}((0,{\bm{\tau}})\times\overline{\Omega})}\\&+C(\theta,b){\bm{\tau}}\Big|\Omega\Big|\Big\|\left(\partial_{t}-\Delta\right)g^{4}\Big\|^{8/7 }_{L^{\infty}({\bm{Q_{\tau}}})}.
\end{aligned}
\end{equation*}
 This finishes the proof.
  \end{proof}
 \begin{remark}
Animmediate consequence of Propositions \ref{propchalex} and \ref{thmchalex} is that  we can reduce the regularity of $g$, it suffices to take ${g\in H^{1/4}(0,{\bm{\tau}};L^{2}(\partial \Omega))}$ ${\cap}$ ${L^{\infty}({\bm{\Sigma_{{\bm{\tau}}}}})}$ ${\cap}$  ${L^{2}(0,{\bm{\tau}};H^{1/2}(\Omega))}$ for Robin and Neumann boundary conditions case and take ${g\in H^{3/4}(0,{\bm{\tau}};L^{2}(\partial \Omega))}$ ${\cap}$ ${L^{\infty}({\bm{\Sigma_{{\bm{\tau}}}}})}$ ${\cap}$  ${L^{2}(0,{\bm{\tau}};H^{3/2}(\Omega))}$ for Dirichlet boundary conditions. For more information on the regularity of the trace operator we refer the reader to \cite{Denk2007} and \cite{Pouso2000}.
 \end{remark}
\subsection{Existence and uniqueness of the solution}
Now, we may give a very direct proof of theorem \ref{thmintro} using Banach fixed point theorem.
\begin{proof}[Proof of theorem \ref{thmintro}]

 $\mathcal{H}=\mathcal{H}_{3}\circ \mathcal{H}_{2} \circ \mathcal{H}_{1}$  is a well posed  continuous  map from $W_{2}^{2,1}({\bm{Q_{\tau}}})$ to $W_{2}^{2,1}({\bm{Q_{\tau}}})$ because it is composed of a three well posed continuous map. Moreover, from \eqref{G} and \eqref{C3} there exists $C_{4}=C({\bm{\tau}},\Omega,\theta)>0$ such that 

\begin{equation}
\Big\|\mathcal{H}(T)\Big\|_{W_{2}^{2,1}({\bm{Q_{\tau}}})}\leqslant C_{4}\Big(\Big\|T \Big\|^{4}_{W_{2}^{2,1}({\bm{Q_{\tau}}})}+\Big\|T_{0}\Big\|_{H^{1}(\Omega)}+\Big\| g \Big\|_{L^{2}(0,{\bm{\tau}};H^{\frac{3}{2}}(\Omega))}+\Big\|I_{b}\Big\|_{L^{2}(0,{\bm{\tau}};L^{2}_{-})}\Big).
\label{LYInj1x}
\end{equation}

 Now, we prove the existence and uniqueness of the solution for the coupled system (\ref{radiatif2})-(\ref{chaleur5}). Let us consider  $(T_{1},T_{2}) \in {W_{2}^{2,1}({\bm{Q_{\tau}}})}^{2}$. From \eqref{G}, \eqref{C3} and \eqref{LYInj1x} we have 
 \begin{equation*}
\Big\|\mathcal{H}(T_{1})-\mathcal{H}(T_{2})\Big\|_{W_{2}^{2,1}({\bm{Q_{\tau}}})}\leqslant C_{4}\Big\|T_{1}^{4}-T_{2}^{4}\Big\|_{L^{2}({\bm{Q_{{\bm{\tau}}}}})}.
\end{equation*}

We denote by $B(0,r)$ the closed ball of radius $r$ in $W_{2}^{2,1}({\bm{Q_{\tau}}})$ with $r$ satisfies ${r^{3}<\frac{1}{4C^{\ast}_{4}}}$, where  $C^{\ast}_{4}=\max_{}\{C_{4},C_{4}C_{5}\}$, $C_{5}$ is given as follows. Using the generalized H\"{o}lder's inequality, we have the following inequality
 \begin{equation}
\Big\|T_{1}^{4}-T_{2}^{4}\Big\|^{2}_{L^{2}({\bm{Q_{{\bm{\tau}}}}})}\leqslant \Big\|T_{1}-T_{2}\Big\|_{L^{8}({\bm{Q_{{\bm{\tau}}}}})}^{2}\Big\|T_{1}+T_{2}\Big\|_{L^{8}({\bm{Q_{{\bm{\tau}}}}})}^{2} \Big\|T_{1}^{2}+T_{2}^{2}\Big\|_{L^{4}({\bm{Q_{{\bm{\tau}}}}})}^{2}.
\label{estimatinSemi}
 \end{equation}
We assume that  $T_{1},T_{2} \in B(0,r)$ then there exists $C_{5}=C({\bm \tau},{\bm\Omega})>0$ such that
  \begin{equation*}
  \Big\|T_{1}^{4}-T_{2}^{4}\Big\|_{L^{2}({\bm{Q_{{\bm{\tau}}}}})}\leqslant 4 C_{5}r^{3}\Big\|T_{1}-T_{2}\Big\|_{W_{2}^{2,1}({\bm{Q_{\tau}}})}.
 \end{equation*}
Then 
\begin{equation*}
\Big\|\mathcal{H}(T_{2})-\mathcal{H}(T_{1})\Big\|_{W_{2}^{2,1}({\bm{Q_{\tau}}})}\leqslant 4 C_{4}C_{5}r^{3}\Big\|T_{1}-T_{2}\Big\|_{W_{2}^{2,1}({\bm{Q_{\tau}}})}.
\end{equation*}
Let us assume that 

$$\Big\|T_{0}\Big\|_{H^{1}(\Omega)}\leqslant \frac{r}{4C^{\ast}_{4}}$$
$$\Big\| g \Big\|_{L^{2}(0,{\bm{\tau}};H^{\frac{3}{2}}(\Omega))}\leqslant \frac{r}{4C^{\ast}_{4}}$$
$$\Big\|I_{b}\Big\|_{L^{2}(0,{\bm{\tau}};L^{2}_{-})}\leqslant \frac{r}{4C^{\ast}_{4}}.$$

Thus $\mathcal{H}(B(0,r))\subset B(0,r)$, then we deduce that $\mathcal{H}$ is a contraction map from $B(0,r)$ to $B(0,r)$. Finally, $\mathcal{H}$ admits a unique fixed point$T \in B(0,r)$ such that  $\mathcal{H}(T)=T$. This implies  the existence and uniqueness of the solution in $W_{2}^{2,1}({\bm{Q_{\tau}}})$. Therefore, by theorem \ref{thsolrad} and Proposition \ref{propchalex} the system (\ref{radiatif2})-(\ref{chaleur5}) has a unique solution $(T,I)\in W_{2}^{2,1}({\bm{Q_{\tau}}})\times L^{2}(0,{\bm{\tau}};{\bm{\mathcal{W}}})$.
\end{proof}

{\bf{Acknowledgements: }}
The authors would like to thank Professor {\bf Michel PIERRE} (\'Ecole Normale Sup\'erieure de Rennes, IRMAR, France) for many helpful discussions and comments. Moreover, we want to thank the anonymous {\bf{referees}} for their suggestions which led to a improvement of the original manuscript.

\end{document}